\newtheorem{theorem}{Theorem}
\newtheorem{lemma}[theorem]{Lemma}
\newtheorem{proposition}[theorem]{Proposition}
\newtheorem{corollary}[theorem]{Corollary}
\newtheorem{definition}[theorem]{Definition}
\newcommand\norm[1]{\left\lVert#1\right\rVert}
\newcommand{\PR}{\mathbb P}
\newcommand{\ER}{\mathbb E}
\newcommand{\Xc}{\mathcal X}
\newcommand{\mytilde}{\raise.17ex\hbox{$\scriptstyle\mathtt{\sim}$}}
\begin{document}

\title{Mixing time of vertex-weighted exponential\\ random graphs}

\author{Ryan DeMuse \and Terry Easlick \and Mei Yin}

\thanks{Mei Yin's research was partially supported by NSF grant DMS-1308333.}

\address{Department of Mathematics, University of Denver, Denver, CO 80208,
USA} \email {ryan.demuse@du.edu \\ terry.easlick@du.edu \\ mei.yin@du.edu}

\dedicatory{\rm \today}

\begin{abstract}
Exponential random graph models have become increasingly important in the study of modern networks ranging from social networks, economic networks, to biological networks. They seek to capture a wide variety of common network tendencies such as connectivity and reciprocity through local graph properties. Sampling from these exponential distributions is crucial for parameter estimation, hypothesis testing, as well as understanding the features of the network in question. We inspect the efficiency of a popular sampling technique, the Glauber dynamics, for vertex-weighted exponential random graphs. Letting $n$ be the number of vertices in the graph, we identify a region in the parameter space where the mixing time for the Glauber dynamics is $\Theta(n \log n)$ (the high temperature phase) and a complement region where the mixing time is exponentially slow on the order of $e^{\Omega(n)}$ (the low temperature phase). Lastly, we give evidence that along a critical curve in the parameter space the mixing time is $O(n^{2/3})$.

\vskip.1truein

\noindent \textbf{Keywords} Exponential random graphs $\cdot$ Mixing time $\cdot$ Glauber dynamics

\vskip.1truein

\noindent \textbf{Mathematics Subject Classification} 05C80 $\cdot$ 60J10 $\cdot$ 90B15
\end{abstract}

\maketitle

\newpage

\section{Introduction}
\label{intro}
Exponential random graph models are powerful tools in the study of modern networks ranging from social networks, economic networks, to biological networks. By representing the complex global structure of a large network through tractable local properties, these models seek to capture a wide variety of common network tendencies. See for example Bollob\'{a}s \cite{B}, Durrett \cite{Durrett}, van der Hofstad \cite{Hofstad}, Newman \cite{Newman.book}, and references therein. Despite their flexibility, conventionally used exponential random graphs suffer from some deficiencies that may hamper their utility to researchers \cite{CD} \cite{Krioukov}. Consider the dynamics of spreading events in a complex network. There are sensitive control points collectively known as ``influential spreaders'', whose infection maximizes the overall fraction of infected vertices. For information diffusion over Twitter for example, the influential spreaders may be thought of as a celebrity or a news source. Vertices in the network thus carry with themselves some distinguishing features, a phenomenon that could not be directly modeled by standard exponential random graphs since their underlying probability space consists of simple graphs only. By placing weights on the vertices, the current work addresses this limitation of the exponential model.

Before proceeding further, we provide another reason why the vertex-weighted model may be of interest \cite{BR}. Let $\mathcal{G}_n$ be the set of all vertex-weighted labeled graphs $G_n$ on $n$ vertices. Assume that the vertex weights are iid real random variables subject to a common distribution $\nu$ supported on $[0, 1]$, the edge weight between two vertices is a product of the vertex weights, and the triangle weight among three vertices is a product of the edge weights. Let $U$ be a random variable distributed according to $\nu$ and denote its expectation with respect to $\nu$ by $\ER$. Further denote the expected edge weight of $G_n \in \mathcal{G}_n$ by $e$ and the expected triangle weight by $t$. Then we have $e=\ER(U)^2$ and $t=(\ER(U^2))^3$.
Note that by suitably choosing $\nu$, the entire region between the upper boundary of the realizable edge-triangle densities and the Erd\H{o}s-R\'{e}nyi curve ($e^3 \le t \le e^{3/2}$) may be attained for vertex-weighted random graphs. See Figure \ref{et}. If we take $U$ to be Bernoulli, the
upper boundary is reproduced:
\begin{equation}
\ER(U) \ge \ER(U^2) \implies e^{3/2} \ge t.
\end{equation}
If we take $U$ to be a constant a.s., the
Erd\H{o}s-R\'{e}nyi curve is recovered:
\begin{equation}
\ER(U^2) \ge \ER(U)^2 \implies t \ge e^3.
\end{equation}
By contrast, since simple graphs may be interpreted as having iid Bernoulli$(.5)$ weights on the edges, the underlying graph space of standard exponential random graphs lies at a single point $(1/2, 1/8)$ on the Erd\H{o}s-R\'{e}nyi curve. Even without incorporating the exponential construction, assigning vertex weights alone adds intriguing characteristics to the model.

\begin{figure}[t!]
	\centering
	\includegraphics[clip=true, height=2.5in]{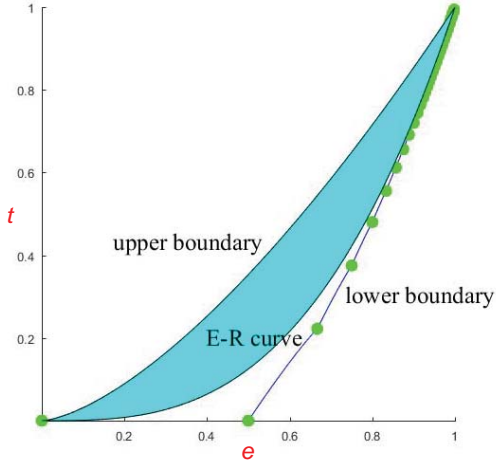}
	\caption{The cyan region shows where the expected edge and triangle densities lie for a vertex-weighted random graph model.}
	\label{et}
\end{figure}

\subsection{The model}
\label{model}
In this paper we will restrict our attention to vertex-weighted exponential random graph models where the vertex weights take values in $0$ and $1$ only. We include in the exponent a combination of edge and triangle densities, both with non-negative parameters. Even under this simplification, the vertex-weighted model depicts captivating behaviors in large-scale networks. Instead of Erd\H{o}s-R\'{e}nyi, it emphasizes the formation of cliques, and is particularly suited for the modeling of a broad range of social networks. Consider the Facebook friend graph for example, where we make the idealistic assumption that a person is either interested (vertex value $1$) or not (vertex value $0$) in building a friendship. Then having value $1$ at three distinct vertices will force the formation of a triangle rather than a two-star, which is in analogy with the common conception that a friend of a friend is more likely to be a friend. An added benefit of this setting is that the model may be considered as an extension of the lattice gas (Ising) model on a graph, and the techniques of spin models may be employed in our investigation \cite{EFHR}.

A graph $G_n \in \mathcal{G}_n$ may be viewed as an element of $X \in \Xc:=\{0,1\}^n$, referred to as ``configurations'', that attributes weights $0$ or $1$ to the ordered vertices of $G_n$. Denote by $X(i)$ the weight of vertex $i$. Borrowing terminology from spin models, the vertex weight $X(i)$ will be called the ``spin'' at $i$. By iid-ness, the spins at different vertices are independent, and subject to a common distribution $\nu$ that assumes value $0$ with probability $1-p$ and $1$ with probability $p$ for some $p \in (0,1)$. Let $H_1$ be the number of edges for the configuration $X$ and $H_2$ be the number of triangles. They may be formulated explicitly in terms of lattice gas (Ising) spins:
\begin{equation}
H_1(X) = \sum_{i \neq j} X(i) X(j)  \text{   and   } H_2(X) = \sum_{i \neq j \neq k} X(i) X(j) X(k),
\end{equation}
where the inequality $n_1 \neq n_2 \neq \cdots \neq n_k$ means that $n_i \neq n_j$ for any $i \neq j$. We rescale the edge and triangle parameters in the exponent, $H=(\alpha_1/n)H_1+(\alpha_2/n^2)H_2$, so that the total contribution of a single vertex to the weights is $O(1)$. We are now ready to introduce a Gibbs distribution to the set of spin configurations $\Xc$. To avoid cumbersome notation, we suppress the $n$-dependence in many of the quantities under discussion.

\begin{definition}
\label{def}
Take $\alpha_1\geq 0$, $\alpha_2\geq 0$, and $p \in (0,1)$. Let $X \in \Xc$ be a spin configuration. Denote by $\omega(X)$ the number of vertices with spin $1$ in $X$. Assign a Gibbs probability measure on $\Xc$ as
	\begin{equation} \label{dist}
		\pi (X) = Z^{-1} \exp(H(X)) p^{\omega(X)} (1-p)^{n-\omega(X)},
	\end{equation}
where $H=(\alpha_1/n)H_1+(\alpha_2/n^2)H_2$ is the combination of edge and triangle weights and $Z = Z(n,p,\alpha_1,\alpha_2)$ is the normalizing constant (partition function).
\end{definition}

The configuration space $\Xc$ can be partially ordered in the sense that for $X, Y \in \Xc$, we say that $X \le Y$ if and only if $X(i) \le Y(i)$ for every $i \in \{1,\dots,n\}$. To model the evolution of the network towards equilibrium, we will adopt (single-site) Glauber dynamics, which is a discrete-time irreducible and aperiodic Markov chain $\left(X_t\right)_{t=0}^\infty$ on $\Xc$. Under the Glauber dynamics, the random graph evolves by selecting a vertex $i$ at random and updating the spin $X(i)$ according to $\pi$ conditioned to agree with the spins at all vertices not equal to $i$. By sampling
from the exponential distribution using Glauber dynamics, we learn the global structure of the network as well as
parameters describing the interactions. Explicitly, let $X \in \Xc$ be a configuration and set the initial state $X_0=X$. The next step of the Markov chain, $X_1$, is obtained as follows. Choose a vertex $i$ uniformly at random from $\{1,\dots,n\}$. Let $X_1(j) = X(j)$ for all $j \neq i$, $X_1(i) = 1$ with probability $P_{+}$ and $X_1(i) = 0$ with probability $P_{-}$, where the update probabilities $P_{+}$ and $P_{-}$ are given by
\begin{equation} \label{upprob}
		P_{+}(X, i) =  \frac{p\exp(H'(X, i))}{p\exp(H'(X, i)) + (1-p)}
\end{equation}
and
\begin{equation} \label{downprob}
		P_{-}(X, i) =  \frac{1-p}{p\exp(H'(X, i)) + (1-p)}.
\end{equation}
Here $H'=(\alpha_1/n)S+(\alpha_2/n^2)T$ depends only on the spins at vertices other than $i$, with
\begin{equation}
\label{ST}
S(X, i) = \sum_{i \neq j} X(j)  \text{   and   } T(X, i) = \sum_{i \neq j \neq k} X(j) X(k)=\frac{S(X, i)(S(X, i)-1)}{2}.
\end{equation}
For $X, Y \in \Xc$, the transition matrix for the Glauber dynamics is then
	\begin{equation} \label{transmat}
		P(X,Y) = \frac{1}{n} \sum_{i} \frac{f(Y(i)) \exp\left( Y(i) H'(X, i)\right)}{f(Y(i)) \exp\left( Y(i) H'(X, i) \right) + f(1-Y(i)) \exp\left( (1-Y(i)) H'(X, i) \right) }\mathbf{1}_{\left\{\substack{ Y(j) = X(j)\\ j \neq i }\right\}},
	\end{equation}
where $Y(i) \in \{0,1\}$ and we define $f$ such that $f(0) = 1-p$ and $f(1) = p$ to lighten the notation.

\subsection{Mixing time}
The Gibbs distribution $\pi$ is stationary and reversible for the Glauber dynamics chain. By the convergence theorem for ergodic Markov chains, the Glauber dynamics will converge to the stationary distribution and our goal is to obtain some estimates on the mixing time, since it greatly affects the efficiency of simulation studies and sampling algorithms \cite{BBS} \cite{CD1}. Given $\varepsilon > 0$, the mixing time for this Markov chain is defined as
	\begin{equation} \label{tmix}
		t_{\text{mix}}(\varepsilon) := \min\{ t : d(t) \le \varepsilon \},
	\end{equation}
where
\begin{equation}
\label{d}
d(t) = \max_{X \in \Xc} \norm{P^{t}(X,{\cdot}) - \pi}_{\text{TV}}
\end{equation}
measures the total variation distance to stationarity of the Glauber dynamics chain after $t$ steps. As is standard, we take $t_{\text{mix}}:= t_{\text{mix}}(1/4)$. The mixing time is thus defined to be the minimum number of discrete time steps such that, starting from an arbitrary configuration $X$, the chain is within total variation distance $1/4$ from the stationary distribution $\pi$. For background on mixing times, see Aldous and Fill \cite{AF} and Levin et al. \cite{MCMT}. Our results will indicate that the mixing time can vary enormously depending on the choice of parameters.

\subsection{Normalized magnetization}
\label{mag}
Given a spin configuration $X \in \Xc$, the normalized magnetization $c$ of $X$ is defined as
\begin{equation}
c(X) = \frac{1}{n} \sum_{i = 1}^{n} X(i).
\end{equation}
Adopting (single-site) Glauber dynamics on $\Xc$, the normalized magnetization chain $\left(c_t\right)_{t=0}^\infty$ is a projection of the chain $\left(X_t\right)_{t=0}^\infty$ and so is also aperiodic and irreducible. Set the initial state $c_0=c$. From the mechanism described in Section \ref{model}, after one Glauber update, $c_1$ will take on one of three values: $c-1/n$, $c$, or $c+1/n$. If a spin $0$ vertex is chosen and updated to spin $1$, $c$ changes to $c + 1/n$. Alternatively, if a spin $1$ vertex is chosen and updated to spin $0$, $c$ changes to $c - 1/n$. When no spins are updated, $c$ stays the same. By (\ref{upprob}), the probability that we select a spin $0$ vertex and update it to spin $1$ is
	\begin{equation} \label{Pu}
		P_u = \frac{ n-cn }{ n } \frac{ p \exp\left( \alpha_1 c + \frac{\alpha_2}{2} c(c - \frac{1}{n}) \right) }{ p \exp\left( \alpha_1 c + \frac{\alpha_2}{2} c(c - \frac{1}{n}) \right)+(1-p) }.
	\end{equation}
Similarly, by (\ref{downprob}), the probability that we select a spin $1$ vertex and update it to spin $0$ is
	\begin{equation} \label{Pd}
		P_d = \frac{ cn }{ n } \frac{ 1-p }{  p\exp\left( \alpha_1 (c - \frac{1}{n}) + \frac{\alpha_2}{2} (c - \frac{2}{n})(c - \frac{1}{n}) \right)+(1-p) }.
	\end{equation}
Combining (\ref{Pu}) and (\ref{Pd}), the magnetization $c_t$ moves up with probability $P_u$, down with probability $P_d$, and remains unchanged with probability $1 - P_u - P_d$. For $n$ large enough, $P_u \asymp (1-c)\lambda(c)$ and $P_d \asymp c(1-\lambda(c))$, where
\begin{equation}
\label{lamb}
\lambda(c) = \frac{ p \exp\left( \alpha_1 c + \frac{\alpha_2}{2} c^2 \right) }{ p\exp\left( \alpha_1 c + \frac{\alpha_2}{2} c^2 \right)+(1-p)}
\end{equation}
represents the asymptotic probability that a chosen vertex is updated to spin $1$. This implies that the expected magnetization drift is asymptotically $(\lambda(c)-c)/n$, and a rigorous justification may be found in Lemma \ref{expmagnetization}.

\begin{figure}[t!]
	\centering
	\includegraphics[clip=true, height=4in, angle=-90]{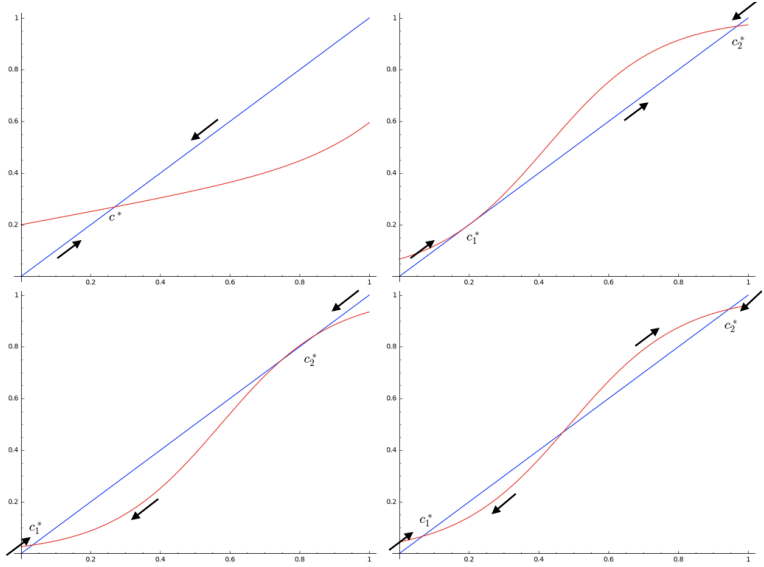}
	\caption{Behavior of the $\lambda$ function in different regions of the parameter space with arrows indicating whether the fixed point is an attractor or a repellor.}
    \label{lambda(c)}
\end{figure}

\subsection{Phase classification}
\label{phase}
The magnetization chain $\left(c_t\right)$ is a deciding factor in the convergence of the Glauber dynamics chain $\left(X_t\right)$. Note that $0 \le c_t \le 1$ and $\lambda$ is a smooth and increasing function on $[0,1]$. Since $\lambda(0)>0$ and $\lambda(1) < 1$, $\lambda(c) = c$ admits at least one solution in $(0, 1)$. If the solution $c$ is unique and not an inflection point, i.e. $\lambda'(c)<1$ (referred to as the ``high temperature phase''), then independent of the initial position all configurations will be driven towards it, and the burn-in stage will cost $O(n)$ steps. See the upper left plot of Figure \ref{lambda(c)}. Conversely, if there exist at least two solutions $c$ such that $\lambda'(c)<1$ (referred to as the ``low temperature phase''), then the burn-in procedure will take the configurations to different attractor states depending on their initial positions. See the lower right plot of Figure \ref{lambda(c)}. Once the configuration is close to an attractor, the Glauber dynamics allows an exponentially small flow of probability for it to leave. A detailed examination of the burn-in period will be provided in Section \ref{burn}.

In Section \ref{fast}, through estimating the average distance after one update between two coupled configurations that agree everywhere except at a single vertex, we show that the Glauber dynamics $X_t$ mixes in $O(n \log n)$ steps in the high temperature phase. Relating to coupon collecting and employing spectral methods, the same asymptotic lower bound $\Omega(n \log n)$ is validated. While in Section \ref{slow}, by a conductance argument using the Cheeger inequality, we establish exponentially slow mixing of the Glauber dynamics $X_t$ in the low temperature phase. Finally, in Section \ref{critical}, we give evidence that the burn-in will cost $O(n^{3/2})$ steps along the ``critical curve'', and the Glauber dynamics $X_t$ is thus expected to mix in $O(n^{3/2})$ steps. See Figure \ref{3D}. The cyan and yellow surfaces separate the high and low temperature phases, with their intersection marked by the critical curve. Convergence of the Glauber dynamics $X_t$ elsewhere on the two surfaces corresponds to the situation where $\lambda(c)=c$ has at least two solutions and one solution $c$ satisfies $\lambda'(c)=1$. See the upper and right and lower left plots of Figure \ref{lambda(c)}. The mixing time largely depends on the movement of the chain around the inflection point and is not addressed in this paper.

\begin{figure}[t!]
	\centering
	\includegraphics[clip=true, width=5.5in]{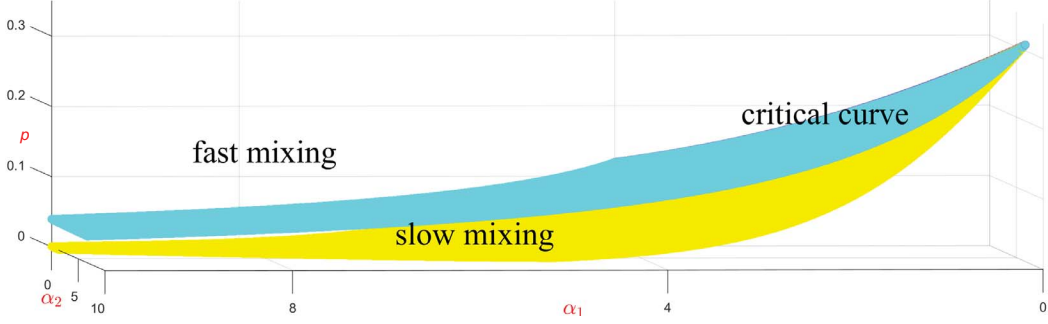}
	\caption{Surfaces in the parameter space illustrating the region with fast vs. slow mixing and identifying the critical curve.}
    \label{3D}
\end{figure}

\section{Burn-in period}
\label{burn}
We start by running the Glauber dynamics for an initial burn-in period. This will ensure that the associated magnetization chain is with high probability close to an attractor. Let $X \in \Xc$ be any spin configuration. Set the initial state $X_0=X$ and let $c_t$ be the normalized magnetization of $X_t$ at time $t$. We use $\PR_X$ and $\ER_X$ respectively to denote the underlying probability measure and associated expectation. To keep the notation light, we omit the explicit dependence on $X$ when it is clear from the context.



\begin{lemma} \label{expmagnetization}
The expected drift in $c_t$ after one step of the Glauber dynamics, starting from the configuration $X$, is given by
	\begin{equation} \label{expTstep}
		\ER(c_{t+1} - c_t \mid c_t) = \frac{1}{n} \left( \lambda(c_t) - c_t \right) + O\left(\frac{1}{n^2}\right),
	\end{equation}
where $\lambda$ is defined as in (\ref{lamb}).
\end{lemma}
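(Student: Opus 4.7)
The plan is direct: since a single Glauber update changes the magnetization by at most $\pm 1/n$, conditional on $c_t$ we have
\[
\ER(c_{t+1} - c_t \mid c_t) = \frac{1}{n}(P_u - P_d),
\]
where $P_u$ and $P_d$ are the expressions (\ref{Pu}) and (\ref{Pd}). It therefore suffices to prove that $P_u - P_d = \lambda(c_t) - c_t + O(1/n)$, with the implicit constant uniform in the starting configuration.

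First I would compare the exponents that appear inside $P_u$, $P_d$, and $\lambda(c_t)$. A short computation gives the exponent in $P_u$ as $\alpha_1 c_t + (\alpha_2/2) c_t^2 - \alpha_2 c_t/(2n)$, while the exponent in $P_d$ equals $\alpha_1 c_t + (\alpha_2/2) c_t^2 + O(1/n)$ after expanding the shifted product $(c_t - 2/n)(c_t - 1/n)$. Both differ from the exponent $\alpha_1 c_t + (\alpha_2/2) c_t^2$ of $\lambda$ by $O(1/n)$, uniformly for $c_t \in [0,1]$, with implicit constants depending only on $\alpha_1$ and $\alpha_2$.

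Next I would apply a first-order Taylor expansion to the rational function $r(x) = p e^x/(p e^x + (1-p))$, which is smooth with derivative bounded on any compact interval. Because $c_t \in [0,1]$ confines the relevant arguments to a fixed compact set, this yields
\[
P_u = (1 - c_t)\lambda(c_t) + O(1/n) \quad \text{and} \quad P_d = c_t\bigl(1 - \lambda(c_t)\bigr) + O(1/n).
\]
Subtracting, $P_u - P_d = \lambda(c_t)(1-c_t) - c_t(1-\lambda(c_t)) + O(1/n) = \lambda(c_t) - c_t + O(1/n)$, and dividing by $n$ produces (\ref{expTstep}).

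There is no real obstacle—the argument is a routine first-order expansion. The only point requiring care is uniformity of the implicit constants in the $O(1/n)$ error over $c_t \in [0,1]$ and hence over all starting configurations $X$; this follows immediately from the smoothness of $r$ and the boundedness of the derivatives of the exponents on the compact interval $[0,1]$, since $\alpha_1$, $\alpha_2$, and $p$ are fixed parameters of the model.
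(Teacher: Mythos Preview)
Your argument is correct and follows essentially the same route as the paper: both proofs start from $\ER(c_{t+1}-c_t\mid c_t)=\tfrac{1}{n}(P_u-P_d)$, observe that the exponents appearing in $P_u$ and $P_d$ differ from $\alpha_1 c_t+\tfrac{\alpha_2}{2}c_t^2$ by $O(1/n)$, and then use smoothness of the logistic-type function $r(x)=pe^x/(pe^x+1-p)$ on a compact interval to conclude that $P_u-P_d=\lambda(c_t)-c_t+O(1/n)$. The paper carries this out by explicitly isolating the correction factors $\exp(O(1/n))-1$ in the numerators, whereas you phrase it as a first-order Taylor expansion of $r$; these are the same estimate.
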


\begin{proof}
From our discussion in Section \ref{mag}, we compute
\begin{align}
\ER(c_{t+1} - c_t \mid c_t) &= \frac{1}{n}(1-c_t)\frac{ p \exp\left( \alpha_1 c_t + \frac{\alpha_2}{2} c_t \left( c_t - \frac{1}{n} \right) \right) }{ p\exp\left( \alpha_1 c_t + \frac{\alpha_2}{2} c_t \left( c_t - \frac{1}{n} \right) \right)+(1-p) } \notag\\
	&\hspace{1cm}- \frac{1}{n}c_t \frac{ 1-p }{ p \exp\left( \alpha_1 \left( c_t - \frac{1}{n} \right) + \frac{\alpha_2}{2} \left( c_t - \frac{1}{n} \right)\left( c_t - \frac{2}{n} \right) \right) +(1-p)}.
\end{align}
Note that lower order fluctuations may be extracted from the exponents:
\begin{equation}
\exp\left( \alpha_1 c_t + \frac{\alpha_2}{2} c_t \left( c_t - \frac{1}{n} \right) \right)
=\exp\left( - \frac{\alpha_2}{2n}c_t \right) \exp\left( \alpha_1 c_t + \frac{\alpha_2}{2}\left(c_t\right)^2 \right),
\end{equation}
\begin{multline}
\exp\left( \alpha_1 \left( c_t - \frac{1}{n} \right) + \frac{\alpha_2}{2} \left( c_t - \frac{1}{n} \right)\left( c_t - \frac{2}{n} \right) \right)\\
=\exp\left( - \frac{\alpha_1}{n} - \frac{3 \alpha_2}{2n}c_t + \frac{\alpha_2}{n^2} \right) \exp\left( \alpha_1 c_t + \frac{\alpha_2}{2}\left(c_t\right)^2 \right),
\end{multline}
which gives
	\begin{align}
		\ER(c_{t+1} - c_t \mid c_t) &= \frac{1}{n} \left( \lambda(c_t) - c_t \right) \notag\\
			&+ \frac{\lambda(c_t)}{n}(1-c_t)\frac{ \exp\left( - \frac{\alpha_2}{2n}c_t \right) - 1 }{ 1 + \left(\frac{ p }{ 1-p }\right)\exp\left( - \frac{\alpha_2}{2n}c_t \right) \exp\left( \alpha_1 c_t + \frac{\alpha_2}{2}\left(c_t\right)^2 \right)} \notag\\
			&+ \frac{\lambda(c_t)}{n} c_t \frac{ \exp\left( - \frac{\alpha_1}{n} - \frac{3 \alpha_2}{2n}c_t + \frac{\alpha_2}{n^2} \right) - 1 }{ 1 + \left( \frac{p}{1-p} \right)\exp\left( - \frac{\alpha_1}{n} - \frac{3 \alpha_2}{2n}c_t + \frac{\alpha_2}{n^2} \right) \exp\left( \alpha_1 c_t + \frac{\alpha_2}{2}\left(c_t\right)^2 \right) }.
	\end{align}
Following standard analytical argument, the long fractional terms above are $O(1/n)$. The conclusion readily follows.
\end{proof}

Applying Lemma \ref{expmagnetization}, the following Theorem \ref{negdrift} and Corollaries \ref{cstarattraction} and \ref{cstarattraction2} show that if the associated magnetization $c_0$ of the initial configuration
is significantly different from an attractor $c^*$ but bounded away from any other solution of the fixed point equation $\lambda(c)=c$, then there is a drift of the Glauber dynamics towards a configuration whose normalized magnetization is closer to $c^*$ than the starting state, i.e. $c_t\rightarrow c^*$. Theorem \ref{negdrift} is proved when $c_0>c^*$, and an analogous result holds for $c_0<c^*$ using a similar line of reasoning. See Figure \ref{lambda(c)} for an illustration of this burn-in procedure.

\begin{theorem} \label{negdrift}
Suppose $\lambda(c^{\ast}) = c^{\ast}$ and $\lambda'(c^{\ast}) < 1$. Let $\overline{c} > c^{\ast}$ be the smallest value satisfying $\lambda(\overline{c}) = \overline{c}$. (If no such $\overline{c}$ exists, take $\overline{c} = 1$.)
Let the initial magnetization be $c_0$ with $c^{\ast} + \mu < c_0 < \overline{c} - \mu$ for some $\mu>0$. Then
there exist $\eta, d>0$ depending only on $\mu, p,\alpha_1,\alpha_2$ such that $T = dn$ and
\begin{equation}
\PR\left( c_T \le c_0 - \eta \right) \ge 1 - e^{-\Omega(\sqrt{n})}.
\end{equation}
\end{theorem}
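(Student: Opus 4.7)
The plan is to combine the drift estimate of Lemma \ref{expmagnetization} with a Doob decomposition and Azuma--Hoeffding concentration. The driving observation is a uniform negative drift on $(c^{\ast},\overline{c})$: since $\lambda(c)-c$ has no zero strictly between $c^{\ast}$ and $\overline{c}$ and $\lambda'(c^{\ast})<1$ forces $\lambda(c)<c$ just above $c^{\ast}$, a sign/continuity argument pins down $\lambda(c)<c$ throughout $(c^{\ast},\overline{c})$. Restricting to the compact sub-interval $I:=[c^{\ast}+\mu/2,\overline{c}-\mu/2]$, I would set
\[
\delta:=\min_{c\in I}\bigl(c-\lambda(c)\bigr)>0,
\]
a constant depending only on $\mu,p,\alpha_1,\alpha_2$.

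Next I would fix the parameters: take $d=\mu/3$, $T=\lfloor dn\rfloor$, and $\eta=\mu\delta/24$. Because a single Glauber update moves the normalized magnetization by at most $1/n$, the inclusion $c_t\in[c_0-d,c_0+d]\subset(c^{\ast}+2\mu/3,\overline{c}-2\mu/3)\subset I$ holds \emph{deterministically} for every $t\le T$. Lemma \ref{expmagnetization} therefore applies at every step, and for $n$ large enough
\[
\ER(c_{t+1}-c_t\mid\mathcal{F}_t)\le -\frac{\delta}{n}+O\!\left(\frac{1}{n^{2}}\right)\le -\frac{\delta}{2n}.
\]

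I would then Doob-decompose $c_t-c_0=A_t+M_t$, with $A_t=\sum_{s=0}^{t-1}\ER(c_{s+1}-c_s\mid\mathcal{F}_s)$ predictable and $M_t$ a zero-mean martingale whose increments are bounded by $2/n$. The drift bound gives $A_T\le -T\delta/(2n)= -\mu\delta/6+o(1)$, while Azuma--Hoeffding yields
\[
\PR\!\left(|M_T|>\frac{\mu\delta}{24}\right)\le 2\exp\!\left(-\frac{(\mu\delta/24)^{2}n^{2}}{8T}\right)=e^{-\Omega(n)}.
\]
On the complementary event one reads off $c_T\le c_0-\mu\delta/8\le c_0-\eta$, yielding the stated conclusion (with in fact the stronger rate $e^{-\Omega(n)}$, which subsumes $e^{-\Omega(\sqrt n)}$).

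The main obstacle to watch will be ensuring that $c_t$ never exits the region on which the drift bound is valid. My choice $d=\mu/3$ together with the deterministic per-step bound $|c_{t+1}-c_t|\le 1/n$ keeps the trajectory inside $I$ throughout the window $[0,T]$, so no stopping-time surgery is required and the proof reduces to a single Doob--Azuma pass. The remaining ingredients---the analytic fact $\delta>0$, the Doob decomposition, and the Azuma tail bound---are standard, and the symmetric statement for $c_0<c^{\ast}$ is handled by replacing $\overline{c}$ by the largest fixed point below $c^{\ast}$ (or $0$) and reversing signs.
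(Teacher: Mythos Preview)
Your proof is correct and takes a genuinely different route from the paper's. The key simplification is your observation that choosing $d=\mu/3$ forces $|c_t-c_0|\le t/n\le d$ for all $t\le T$, so the trajectory stays inside $I$ \emph{deterministically}. This removes any need to control excursions out of the drift region, and the argument collapses to a single Doob--Azuma pass yielding the sharper rate $e^{-\Omega(n)}$.

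The paper, by contrast, does not exploit this Lipschitz confinement. It allows $d$ to be large (it takes $d>2/\gamma$), so the chain could in principle leave the good window $[c_0-2\eta,c_0+\eta]$; to rule this out it truncates the increments with indicators $\mathbf{1}_{D_{t-1}}$, bounds the moment generating function of the truncated sum via $\theta=O(\sqrt{n})$, and runs a union bound over sub-intervals $[t_1,t_2]$ to show upward excursions are exponentially unlikely. The choice $\theta=O(\sqrt{n})$ is what limits the paper to $e^{-\Omega(\sqrt{n})}$. What the paper's more elaborate machinery buys is a two-sided excursion bound that feeds directly into Corollary~\ref{cstarattraction2} (the chain stays near $c^\ast$ for exponentially many steps); your argument proves the theorem cleanly and iterates to give Corollary~\ref{cstarattraction}, but for Corollary~\ref{cstarattraction2} you would still need a separate concentration-plus-union-bound step over the long time horizon.
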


\begin{proof}
Since $\lambda'(c^{\ast}) < 1$, $\lambda(c) - c < 0$ for $c \in [ c^{\ast} + \mu, \overline{c} - \mu ]$. By the extreme value theorem, the maximum of the smooth function $\lambda(c) - c$ is attained for some value $\underline{c}$ in the compact interval $[ c^{\ast} + \mu, \overline{c} - \mu ]$. Define $\gamma>0$ as $\gamma = -(\lambda(\underline{c}) - \underline{c})/2$. Choose $\eta > 0$ so that $[c_0 - 2\eta, c_0 + \eta] \subseteq [ c^{\ast} + \mu, \overline{c} - \mu ]$. Let
		\begin{equation} \label{Dt}
			D_t(\eta) = \left\{ c_t : c_0 - 2\eta \le c_t \le c_0 + \eta \right\}.
		\end{equation}
By Lemma \ref{expmagnetization}, for $c_t\in D_t(\eta)$ and $n$ sufficiently large, $\ER(c_{t+1} - c_t \mid c_t) \leq -\gamma/n$. Utilizing the negative drift $-\gamma/n$ of the biased random walk $c_t$ and employing a moment generating function method, we first show that ``bad'' magnetization, i.e. $c_t>c_0+\eta$ for some $0\leq t\leq T$, occurs with exponentially small probability.

Define $S_{t_1,t_2}$ as
\begin{equation}
S_{t_1,t_2} = \sum_{t = t_1 + 1}^{t_2} \left( c_t - c_{t-1} + \frac{\gamma}{2n} \right)\mathbf{1}_{ D_{t-1} }.
\end{equation}
The random variable $S_{t_1,t_2}$ records the change in ``good'' magnetization $c_t$ from $t_1$ to $t_2$, shifted by $\gamma/(2n)$ per time step. Let $\mathcal{F}_{t}$ be the natural filtration. By the tower property of expectation,
		\begin{align}
			\ER\left( e^{\theta S_{t_1,t_2}} \right)= \ER\left( e^{\theta S_{t_1,t_2 - 1}} \ER\left( e^{\theta(c_{t_2} - c_{t_2 - 1} + \frac{\gamma}{2n})\mathbf{1}_{D_{t_2 - 1}}} \mid \mathcal{F}_{t_2 - 1} \right) \right).
		\end{align}
Using linearity of expectation, we write
		\begin{align}
			&\ER\left( e^{\theta(c_{t_2} - c_{t_2 - 1} + \frac{\gamma}{2n})\mathbf{1}_{D_{t_2 - 1}}} \mid \mathcal{F}_{t_2 - 1} \right) \notag\\
				&= \sum_{k=0}^{\infty} \ER\left( \frac{ \theta^k (c_{t_2} - c_{t_2 - 1} + \frac{\gamma}{2n})^k }{ k! }\mathbf{1}_{D_{t_2 - 1}}^{k} \mid \mathcal{F}_{t_2 - 1} \right) \notag\\
				&= 1 + \ER\left( \theta(c_{t_2} - c_{t_2 - 1} + \frac{\gamma}{2n})\mathbf{1}_{D_{t_2 - 1}} \mid \mathcal{F}_{t_2 - 1} \right) + \sum_{k=2}^{\infty} \ER\left( \frac{ \theta^k (c_{t_2} - c_{t_2 - 1} + \frac{\gamma}{2n})^k }{ k! }\mathbf{1}_{D_{t_2 - 1}}^{k} \mid \mathcal{F}_{t_2 - 1} \right).
		\end{align}
	Recall that $\ER(c_{t+1} - c_t \mid c_t) \le -\gamma/n$ for $c_t\in D_t$, and so
		\begin{align}
			\ER\left( e^{\theta(c_{t_2} - c_{t_2 - 1} + \frac{\gamma}{2n})\mathbf{1}_{D_{t_2 - 1}}} \mid \mathcal{F}_{t_2 - 1} \right) &\le 1 - \frac{\gamma \theta}{2n} \mathbf{1}_{D_{t_2 - 1}} + O\left( \frac{\theta^2}{n^2}\right).
		\end{align}
Taking $\theta = c\sqrt{n}$ for a sufficiently small constant $c$, the above conditional expectation is less than $1$. Iterating this procedure gives
\begin{equation}
\ER\left(e^{\theta S_{t_1,t_2}}\right) \le \ER\left(e^{\theta S_{t_1,t_2 - 1}}\right) \le {\cdots} \le \ER\left(e^{\theta S_{t_1,t_1}}\right) = 1.
\end{equation}
By the Chernoff bound,
\begin{equation}
\PR\left(S_{t_1,t_2} \ge \frac{\eta}{2} \right) \le \frac{\ER\left( e^{\theta S_{t_1, t_2}} \right)}{e^{\theta\frac{\eta}{2}}}= e^{-\Omega(\sqrt{n})}.
\end{equation}

Consider the set
		\begin{equation} \label{Bt}
			B_{t_1,t_2}(\eta) = \left( \bigcap_{t_1 \le t < t_2} D_t(\eta) \right) \bigcap \left\{ c_{t_2} - c_{t_1} > \frac{\eta}{2} \right\},
		\end{equation}
consisting of all ``good'' magnetizations at time $t_1$ up to time $t_2$, with an increase of at least $\eta/2$ from $t_1$ to $t_2$. Subject to $c_t \in D_t$ for $t_1\leq t<t_2$ and $c_{t_2} - c_{t_1} >\eta/2$,
		\begin{align}
			S_{t_1,t_2} &= \sum_{t = t_1 + 1}^{t_2} \left( c_t - c_{t-1} + \frac{\gamma}{2n} \right) \notag\\
				&= c_{t_2} - c_{t_1} + \frac{\gamma}{2n} (t_2 - t_1) > \frac{\eta}{2},
		\end{align}
from which the containment $B_{t_1, t_2}(\eta) \subseteq \{S_{t_1,t_2} \geq \eta/2\}$ follows. Hence
\begin{equation}
\PR\left(\bigcup_{0 \le t_1 < t_2 \le T} B_{t_1, t_2}\right) \leq n^2 e^{-\Omega(\sqrt{n})}=e^{-\Omega(\sqrt{n})}.
\end{equation}
Take $n$ large enough. Suppose $c_t > c_0 + \eta$ for some $0\leq t\leq T$. Then there exists a $t_1$ such that $c_0 - 2\eta \le c_s$ for all $t_1 \le s \le t$. Define $t_2$ to be the least time greater than $t_1$ with $c_{t_2} > c_0 + \eta$. Then $c_t \in D_t$ for all $t_1 \le t<t_2$ and $c_{t_2} - c_{t_1} >\eta/2$. This implies that
\begin{equation}
\left\{ c_t : c_t > c_0 + \eta \text{ for some } 0 \le t \le T \right\} \subseteq \bigcup_{0 \le t_1 < t_2 \le T} B_{t_1, t_2},
\end{equation}
and further implies that
\begin{equation}
\label{bound}
\PR\left( c_t > c_0 + \eta \text{ for some } 0 \le t \le T \right) \leq e^{-\Omega(\sqrt{n})}.
\end{equation}

We have thus shown that the normalized magnetization $c_t$ remains below $c_0+\eta$ for all $0\leq t\leq T$ with exponentially high probability, provided that $c_0$ is suitably bounded away from any other fixed point of $\lambda$. Next we show that $c_T$ ends below $c_0-\eta$ with exponentially high probability. We prove this by showing that $c_t$ actually reaches $c_0-2\eta$ with exponentially high probability, and then by the preceding argument will have exponentially small probability of increasing to $c_0-\eta$. We have
\begin{equation}
\PR\left(c_t \ge c_0 - 2\eta \text{ for all } 0 \le t \le T\right) \le \PR\left( \bigcap_{0 \le t \le T} D_t(\eta) \right) + \PR\left( c_t > c_0 + \eta \text{ for some } 0 \le t \le T \right).
\end{equation}
Subject to $c_t \in D_t(\eta)$ for $0\leq t\leq T$ and noticing that at worst $c_0=1$ and $c_T=0$,
\begin{align}
			S_{0,T} &= \sum_{t = 1}^{T} \left( c_t - c_{t-1} + \frac{\gamma}{2n} \right) \mathbf{1}_{ D_{t-1} } \notag\\
				&= c_T - c_0 + \frac{\gamma}{2n}T \ge -1 + \frac{\gamma}{2n} T.
\end{align}
Using the Chernoff bound on $S_{0, T}$ and assume that $d>2/\gamma$,
		\begin{multline}
			\PR\left(c_t \ge c_0 - 2\eta \text{ for all } 0 \le t \le T\right) \le \PR\left( S_{0,T} \ge -1 + \frac{\gamma}{2n} T \right) + e^{-\Omega(\sqrt{n})} \\
				\le \frac{\ER\left( e^{\theta S_{0,T}} \right)}{e^{\theta \left( -1 + \frac{\gamma}{2n} T \right)}} + e^{-\Omega(\sqrt{n})} = e^{-\Omega(\sqrt{n})}.
		\end{multline}
Finally,
		\begin{align}
			\PR\left( c_T \ge c_0 - \eta \right)  &\le \PR\left( c_T \ge c_0 - \eta \text{ and } c_t < c_0 - 2\eta \text{ for some } 0 \le t \le T \right) \notag\\
				&+ \PR\left( \left\{ c_t \ge c_0 - 2\eta \text{ for all } 0 \le t \le T \right\} \right) \le e^{-\Omega(\sqrt{n})},
		\end{align}
where to bound the first probability on the right, we apply the bound on ``bad'' magnetization (\ref{bound}) with minor adaptation: initial magnetization $c_t$ (for some $0\leq t\leq T$) in place of $c_0$, time interval under consideration $[t, T]$ in place of $[0, T]$, and the increase in magnetization $c_0-2\eta \rightarrow c_0-\eta$ in place of $c_0 \rightarrow c_0+\eta$.
\end{proof}

Repeated application of Theorem \ref{negdrift} shows that after a burn-in period on the order of $O(n)$, any suitably chosen configuration ends up close to an attractor $c^{\ast}$ with exponentially high probability. Recall the definition of high temperature phase and low temperature phase from Section \ref{phase}. The following corollaries are immediate.

\begin{corollary} \label{cstarattraction}
In the high temperature phase, suppose that $c^{\ast}$ is the unique solution to $\lambda(c)=c$ and $\lambda'(c^{\ast}) < 1$. For any $\varepsilon > 0$, there exists $\alpha > 0$ such that for any initial configuration with associated magnetization $c_0$, when $t \ge \alpha n$ we have
\begin{equation}
\PR\left( c_t \ge c^{\ast} + \varepsilon\right) \le e^{-\Omega(\sqrt{n})}
\end{equation}
and
\begin{equation}
\PR\left( c_t \le c^{\ast} - \varepsilon\right) \le e^{-\Omega(\sqrt{n})}.
\end{equation}
\end{corollary}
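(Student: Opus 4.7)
The plan is to iterate Theorem \ref{negdrift} (together with its symmetric counterpart for $c_0 < c^*$ mentioned in the paragraph preceding it) to drive the magnetization into a small neighborhood of $c^*$ within $O(n)$ Glauber steps, and then invoke the no-overshoot estimate (\ref{bound}) from its proof to maintain that neighborhood.

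Fix $\varepsilon > 0$. In the high temperature phase the unique fixed point $c^*$ satisfies $\lambda(c) - c < 0$ on $(c^*, 1]$ and $\lambda(c) - c > 0$ on $[0, c^*)$; in the notation of Theorem \ref{negdrift} this means $\overline{c} = 1$. Choose $\mu = \varepsilon/3$ and fix a uniform decrement $\eta_0 \in (0, \mu/2)$ small enough that Theorem \ref{negdrift} applied from any $c_0 \in [c^* + \mu + 2\eta_0, 1 - \mu - \eta_0]$ yields, after $T = dn$ steps, $c_T \leq c_0 - \eta_0$ with probability $\geq 1 - e^{-\Omega(\sqrt{n})}$. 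Here $\eta_0$ and $d$ depend only on $\mu, p, \alpha_1, \alpha_2$; crucially $\eta_0$ is chosen first and the starting window is restricted accordingly so that successive iterations enjoy the same decrement. For $c_0 \in (1 - \mu - \eta_0, 1]$ the same moment generating function argument goes through with $D_t(\eta_0)$ intersected with $[0, 1]$, since $\lambda(c) - c$ is uniformly negative on $[c^* + \mu, 1]$ by compactness.

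Starting from an arbitrary $c_0 \in [0, 1]$, setting $N = \lceil 1/\eta_0 \rceil$ and $\alpha = Nd$, applying the above $N$ times in succession (each time reinitializing from the endpoint of the previous window) together with a union bound gives
\begin{equation*}
\PR\bigl(c_{\alpha n} \leq c^* + \mu + 2\eta_0\bigr) \geq 1 - N e^{-\Omega(\sqrt{n})} = 1 - e^{-\Omega(\sqrt{n})}.
\end{equation*}
Since $c^* + \mu + 2\eta_0 < c^* + \varepsilon$, this already settles the corollary at $t = \alpha n$. For $t > \alpha n$, the chain must not climb from below $c^* + \mu + 2\eta_0$ back above $c^* + \varepsilon$ during $(\alpha n, t]$; this is exactly the content of the no-overshoot bound (\ref{bound}), applied on contiguous windows of length $T$ covering $[\alpha n, t]$. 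Each window incurs failure probability $e^{-\Omega(\sqrt{n})}$, so the polynomial-in-$n$ number of windows (sufficient for the mixing time analysis, where $t$ is at most of order $n \log n$) preserves the exponential rate. This establishes the first bound; the second follows verbatim via the symmetric version of Theorem \ref{negdrift} for $c_0 < c^*$.

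The main obstacle is keeping the per-iteration decrement $\eta_0$ uniform in $c_0$, rather than letting it shrink as $c_0$ approaches $c^* + \mu$ (as occurs when one reads the proof of Theorem \ref{negdrift} literally, where $\eta$ was chosen in terms of $c_0$). This is handled by fixing $\eta_0$ a priori and requiring $c_0 \geq c^* + \mu + 2\eta_0$ at each iteration step, together with a routine extension of Theorem \ref{negdrift} to cover the boundary region $c_0 \in (1 - \mu - \eta_0, 1]$. Both adjustments are straightforward in the high temperature phase, where $\lambda(c) - c$ has a single sign on each side of $c^*$.
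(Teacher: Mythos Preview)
Your proposal is correct and follows the same route the paper indicates: the paper does not give an explicit proof of this corollary, stating only that ``repeated application of Theorem~\ref{negdrift}'' makes it ``immediate,'' and your argument is precisely a careful execution of that iteration together with the no-overshoot estimate~(\ref{bound}). Your attention to making the decrement $\eta_0$ uniform in $c_0$ and to the boundary region near $c_0=1$ fills in details the paper leaves implicit; the caveat you raise about the time horizon (polynomially many windows suffice for the later $O(n\log n)$ mixing analysis) is also fair, and indeed Corollary~\ref{cstarattraction2} makes an explicit horizon of $e^{\beta\sqrt{n}}$ part of its statement for the analogous low-temperature claim.
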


\begin{corollary} \label{cstarattraction2}
In the low temperature phase, suppose that $c^{\ast}$ is a solution to $\lambda(c)=c$ and $\lambda'(c^{\ast}) < 1$. Take $\varepsilon > 0$. If the associated magnetization $c_0$ for some initial configuration satisfies $c^{\ast} - \varepsilon \le c_0 \le c^{\ast} + \varepsilon$, then there exists $\beta > 0$,
\begin{equation}
\PR\left( \sup_{0 < t < e^{\beta \sqrt{n}}} c_t \ge c^{\ast} + 2 \varepsilon \right) \le e^{-\Omega(\sqrt{n})}
\end{equation}
and
\begin{equation}
\PR\left( \inf_{0 < t < e^{\beta \sqrt{n}}} c_t \le c^{\ast} - 2 \varepsilon \right) \le e^{-\Omega(\sqrt{n})}.
\end{equation}
\end{corollary}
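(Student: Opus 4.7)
The plan is to prove this metastability statement by slicing the horizon $[0, e^{\beta\sqrt{n}}]$ into $K = \lceil e^{\beta\sqrt{n}}/(dn)\rceil$ blocks of length $T = dn$, bounding the escape probability in each block by $e^{-\Omega(\sqrt{n})}$, and then union bounding. Choosing $\beta>0$ small relative to the implied constant in $\Omega(\sqrt{n})$ keeps the aggregate probability at $e^{-\Omega(\sqrt{n})}$. By symmetry it suffices to handle the upward excursion; the downward bound follows from the $c_0 < c^*$ analog of Theorem \ref{negdrift} (invoked in the excerpt after its statement).

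For the single-block estimate I would first shrink $\varepsilon$, if necessary, so that $c^* + 3\varepsilon < \overline{c}$ and $c^* - 3\varepsilon > \underline{c}$, where $\overline{c},\underline{c}$ are the closest zeros of $\lambda(c)-c$ above and below $c^*$; such a choice is possible because $\lambda'(c^*) < 1$ makes $c^*$ an isolated fixed point, and the stronger statement for small $\varepsilon$ implies the one for larger $\varepsilon$. Then $\lambda(c) - c < 0$ throughout $(c^*, c^* + 3\varepsilon]$, the exact regime where the negative-drift machinery of Theorem \ref{negdrift} bites. Because each Glauber step changes $c_t$ by at most $1/n$, if the chain exceeds $c^* + 2\varepsilon$ at some point of the $k$th block it must first pass upward through $c^* + \varepsilon$; call this entry time $\sigma$, so $c_\sigma \in [c^*+\varepsilon, c^*+\varepsilon + 1/n]$. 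From $\sigma$ I would apply Theorem \ref{negdrift} with $\mu = \varepsilon/2$ and $\eta = \varepsilon/2$; for $\varepsilon$ small the hypotheses $c^*+\mu < c_\sigma < \overline{c}-\mu$ and $[c_\sigma - 2\eta, c_\sigma + \eta] \subseteq [c^*+\mu, \overline{c}-\mu]$ both hold. The supremum estimate (\ref{bound}) inside that proof then yields $c_t \le c_\sigma + \eta \le c^* + 2\varepsilon$ throughout $[\sigma, \sigma+T]$ with probability $1 - e^{-\Omega(\sqrt{n})}$, and since $\sigma \ge kT$ we have $\sigma + T \ge (k+1)T$, covering the remainder of the block.

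Assembling the blocks via the strong Markov property and a union bound over $K$ blocks and two directions completes the proof. The main obstacle, and the reason a direct block-boundary application of Theorem \ref{negdrift} does not work, is that after $O(n)$ steps the chain may sit arbitrarily close to $c^*$, violating the separation hypothesis $c_0 > c^* + \mu$. The stopping-time device above sidesteps this: we only invoke Theorem \ref{negdrift} from the moment the chain first commits to an excursion past the buffer at $c^* + \varepsilon$, at which point its separation from $c^*$ is automatic and a single application per block per direction suffices. A minor bookkeeping item is to verify that the constants $d, \mu, \eta$ and the $\Omega(\sqrt{n})$ rate depend only on $\varepsilon, p, \alpha_1, \alpha_2$ (not on $n$), so that $\beta > 0$ can be chosen once and for all before sending $n \to \infty$.
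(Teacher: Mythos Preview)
The paper does not spell out a proof of this corollary; it simply declares both corollaries ``immediate'' from Theorem~\ref{negdrift}, so your task is to supply the missing argument. Your overall strategy---wait until the chain commits to an excursion past $c^*+\varepsilon$, then invoke the supremum bound~(\ref{bound}) from Theorem~\ref{negdrift}'s proof, and union bound over a subexponential number of such invocations---is exactly right. There is, however, a genuine gap in how you organise the invocations.

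The claim ``since $\sigma \ge kT$'' is not justified. Conditioning on no escape through block $k-1$ only tells you $c_{kT} < c^*+2\varepsilon$; it does \emph{not} force $c_{kT} \le c^*+\varepsilon$. The chain may have upcrossed $c^*+\varepsilon$ back in block $k-3$, lingered in the strip $(c^*+\varepsilon, c^*+2\varepsilon)$ ever since, and only now in block $k$ finally reach $c^*+2\varepsilon$. In that scenario there is no entry time $\sigma$ in block $k$, and a single application of~(\ref{bound}) over a window of length $T=dn$ anchored in block $k$ does not cover the excursion. (There is also a minor arithmetic slip: with $\mu=\eta=\varepsilon/2$ and $c_\sigma \approx c^*+\varepsilon$ you get $c_\sigma - 2\eta = c^*$, which violates the containment $[c_\sigma - 2\eta, c_\sigma+\eta]\subseteq[c^*+\mu,\overline c-\mu]$; take e.g.\ $\mu=\eta=\varepsilon/4$ instead.)

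The cleanest repair is to drop the time-block scaffolding altogether and rerun the $B_{t_1,t_2}$ argument from the proof of Theorem~\ref{negdrift} directly over the full horizon $[0,e^{\beta\sqrt n}]$, with the window $D_t$ taken to be a fixed strip such as $[c^*+\varepsilon/2,\, c^*+2\varepsilon]$ on which $\lambda(c)-c\le -2\gamma<0$. If $\tau$ is the first hitting time of $c^*+2\varepsilon$ and $t_1<\tau$ is the last time with $c_{t_1}\le c^*+\varepsilon$, then $c_t\in D_t$ for all $t_1\le t<\tau$ and $c_\tau-c_{t_1}\ge \varepsilon-1/n$, so the event $\{\tau\le e^{\beta\sqrt n}\}$ is contained in $\bigcup_{t_1<t_2\le e^{\beta\sqrt n}} B_{t_1,t_2}$. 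The single-pair bound $\PR(B_{t_1,t_2})\le e^{-c\sqrt n}$ is unchanged, and the union bound now costs $e^{2\beta\sqrt n}$ rather than $n^2$; choosing $\beta<c/2$ keeps the total at $e^{-\Omega(\sqrt n)}$. Equivalently, if you prefer to keep Theorem~\ref{negdrift} as a black box, union bound over \emph{all} upcrossing times $\sigma$ of the level $c^*+\varepsilon$ in $[0,e^{\beta\sqrt n}]$ (there are at most $e^{\beta\sqrt n}$ of them) rather than over the $K$ block boundaries; from each such $\sigma$ a single application of~(\ref{bound}) together with the conclusion $c_{\sigma+T}\le c_\sigma-\eta$ rules out both hitting $c^*+2\varepsilon$ and remaining above $c^*+\varepsilon$ for the next $T$ steps.
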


\section{Fast mixing at high temperature}
\label{fast}
In this section we study the mixing time of the Glauber dynamics in the high temperature phase. We first establish an upper bound $O(n\log n)$ using path coupling techniques of Bubley and Dyer \cite{BD}. Consider two arbitrary spin configurations $X, Y \in \Xc$. Taking ``attractive'' parameters $\alpha_i\geq 0$ ensures that we may apply a monotone coupling on the chain $(X_t, Y_t)$: $X_t$ is a version of the Glauber dynamics with starting state $X$ and $Y_t$ is a version of the Glauber dynamics with starting state $Y$, if $X_0 \leq Y_0$ then $X_t \leq Y_t$ for all $t$. We write $\PR_{X,Y}$ and $\ER_{X,Y}$ for the underlying probability measure and associated expectation. To keep the notation light, we omit the explicit dependence on $X$ and $Y$ when it is clear from the context. To understand how far apart $X_t$ and $Y_t$ are, we introduce Hamming distance, which records the number of vertices where the two configurations disagree. Define $\rho : \Xc \times \Xc \to \{0,\dots,n\}$ by
\begin{equation}
\rho(X,Y) = \sum_{i=1}^{n} \left| X(i) - Y(i) \right|.
\end{equation}
Following standard contraction argument, it suffices to estimate the average distance after one Glauber update between two coupled configurations $X$ and $Y$ with Hamming distance $\rho(X, Y)=1$.

\begin{lemma} \label{couplinglemma}
Assume that $\sup_{0 \le c \le 1} \lambda'(c) < 1$. Let $X, Y \in \Xc$ be two spin configurations satisfying $X \le Y$ and $\rho(X, Y)=1$. Set the initial state $X_0=X$ and $Y_0=Y$. Then there exists $\delta > 0$ depending only on $p, \alpha_1, \alpha_2$ such that a single step of the Glauber dynamics can be coupled when $n$ is sufficiently large:
\begin{equation}
\ER\left( \rho(X_1,Y_1) \right) \le e^{-\delta /n}.
\end{equation}
\end{lemma}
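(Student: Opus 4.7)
The plan is to apply Bubley--Dyer monotone path coupling. Since $X\le Y$ and $\rho(X,Y)=1$, there is a unique ``disagreement vertex'' $v$ with $X(v)=0$ and $Y(v)=1$, while $X$ and $Y$ agree at every other vertex. I would couple a single Glauber step of $X_0$ and $Y_0$ by selecting the same vertex $i$ uniformly at random together with the same $U\sim\mathrm{Unif}(0,1)$, and setting $X_1(i)=\mathbf{1}\{U\le P_+(X,i)\}$, $Y_1(i)=\mathbf{1}\{U\le P_+(Y,i)\}$. Monotonicity of $H'$ in the configuration (guaranteed by $\alpha_1,\alpha_2\geq 0$) gives $P_+(X,i)\le P_+(Y,i)$, so the property $X_1\le Y_1$ is preserved.

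First I would treat the ``good'' event $i=v$, which occurs with probability $1/n$. Because $H'(X,v)$ and $H'(Y,v)$ depend only on the spins at vertices $j\neq v$, on which $X$ and $Y$ coincide, we have $P_+(X,v)=P_+(Y,v)$; hence $X_1(v)=Y_1(v)$ and $\rho(X_1,Y_1)=0$. For $i\neq v$, occurring with probability $(n-1)/n$, the disagreement at $v$ is preserved, and a new disagreement at $i$ arises with probability exactly $P_+(Y,i)-P_+(X,i)$. Writing $S=S(X,i)$ and noting $S(Y,i)=S+1$ since $v\neq i$ and $Y(v)=1$, a direct computation using (\ref{ST}) gives
\[
H'(Y,i)-H'(X,i)=\frac{\alpha_1}{n}+\frac{\alpha_2 S}{n^2}.
\]
Differentiating (\ref{lamb}) yields the identity $\lambda'(c)=\lambda(c)(1-\lambda(c))(\alpha_1+\alpha_2 c)$. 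Combining a first-order Taylor expansion of the logistic $F(x)=pe^x/(pe^x+(1-p))$ (which satisfies $F'=F(1-F)$) with the $O(1/n)$ reconciliation between $H'(X,i)$ and the asymptotic form $\alpha_1 c+\tfrac12\alpha_2 c^2$ carried out in the proof of Lemma~\ref{expmagnetization}, I would establish, uniformly in the configuration,
\[
P_+(Y,i)-P_+(X,i)\le \frac{\sup_{0\le c\le 1}\lambda'(c)}{n}+O\!\left(\frac{1}{n^2}\right).
\]

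Setting $\kappa:=\sup_{0\le c\le 1}\lambda'(c)<1$ and averaging over the choice of $i$ then gives
\[
\ER\big(\rho(X_1,Y_1)\big)\le \frac{n-1}{n}\left(1+\frac{\kappa}{n}+O\!\left(\frac{1}{n^2}\right)\right) = 1-\frac{1-\kappa}{n}+O\!\left(\frac{1}{n^2}\right),
\]
which is at most $e^{-\delta/n}$ for any fixed $\delta\in(0,1-\kappa)$ once $n$ is sufficiently large. The conceptual content is minimal---the disagreement is erased as soon as $v$ is sampled, while the rate of potential creation at any other vertex is controlled by the slope $\lambda'$---so the only real obstacle is the $O(1/n^2)$ bookkeeping for the subleading corrections in the exponent of $H'$. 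These must be bounded uniformly over all configurations, which is exactly the issue resolved in Lemma~\ref{expmagnetization} via the smoothness of $\lambda$ on the compact interval $[0,1]$.
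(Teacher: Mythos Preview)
Your proposal is correct and follows essentially the same approach as the paper: the same monotone coupling with shared $U$, the same case split on whether the selected vertex is the disagreement vertex $v$, and the same reduction of $P_+(Y,i)-P_+(X,i)$ to $\lambda'(c)/n+O(1/n^2)$. The only cosmetic difference is that the paper packages the increment as $f(S+1)-f(S)$ and applies the mean value theorem to obtain $g'(\bar c)/n$ with $g'-\lambda'=O(1/n)$, whereas you factor through the logistic $F$ and use $F'=F(1-F)$ together with your explicit formula for $H'(Y,i)-H'(X,i)$; both routes land on the identity $\lambda'(c)=\lambda(c)(1-\lambda(c))(\alpha_1+\alpha_2 c)$ and the same final inequality $1-(1-\kappa)/n+O(1/n^2)\le e^{-\delta/n}$.
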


\begin{proof}
Let $X, Y \in \Xc$ be two configurations such that $X \le Y$ and there exists a single vertex $i$ such that $X(i) = 0$ and $Y(i) = 1$. Let $U$ be a uniform random variable on $[0,1]$. We apply the standard monotone coupling, where $U$ is used as the common source of noise to update both chains so that they agree as often as possible. From the mechanism described in Section \ref{model}, the chain evolves by selecting a vertex $j$ uniformly at random and updating the spin at $j$. Set
\begin{equation} \label{Xw}
			X_1(j) = \left\{ \begin{array}{ll}
						1 & U \le p_{+}(X, j), \\
						0 & U > p_{+}(X, j),
					\end{array} \right.
\hspace{1cm}
			Y_1(j) = \left\{ \begin{array}{ll}
						1 & U \le p_{+}(Y, j), \\
						0 & U > p_{+}(Y, j),
					\end{array} \right.
\end{equation}
and $X_1(k) = X(k)$ and $Y_1(k) = Y(k)$ for all $k \neq j$. Define the function $f(S)$ as
\begin{equation}
f(S) = \frac{ p \exp\left( \frac{\alpha_1}{n} S + \frac{\alpha_2}{2n^2}S(S-1) \right) }{ p \exp\left( \frac{\alpha_1}{n} S + \frac{\alpha_2}{2n^2}S(S-1) \right) + (1-p) }.
\end{equation}
If $j = i$, then $p_{+}(X, j)=f\left(S(X, j)\right)=p_{+}(Y, j)$ and so $\rho(X_1,Y_1) = 0$. For $j \neq i$, we have $p_{+}(X, j)=f\left(S(X, j)\right)$ while $p_{+}(Y, j)=f\left(S(X, j)+1\right)$, where $0\leq S(X, j)\leq n-2$. Since $f$ is a smooth and increasing function, this shows that $p_{+}(X, j)\leq p_{+}(Y, j)$. Hence $\rho(X_1,Y_1) = 2$ if $p_{+}(X, j) < U \le p_{+}(Y, j)$ and $\rho(X_1, Y_1)=1$ otherwise.

We wish to find an upper bound for
		\begin{equation} \label{conexp}
			\ER \left( \rho(X_1,Y_1) \right) = 1 - \frac{1}{n} + \frac{1}{n} \sum_{j \neq i} \left(p_{+}(Y, j) - p_{+}(X, j)\right).
		\end{equation}
To that end, we compute, by the mean value theorem
\begin{equation}
p_{+}(Y, j) - p_{+}(X, j)=f\left(S(X, j)+1\right)-f\left(S(X, j)\right)=\frac{g'(\overline{c})}{n},
\end{equation}
where $0\leq \overline{c}\leq 1$ and $g'(c)$ is defined as
\begin{equation}
g'(c)=\frac{p(1-p)\exp\left(\alpha_1 c+\frac{\alpha_2}{2}c\left(c-\frac{1}{n}\right)\right)}{p\exp\left(\alpha_1 c+\frac{\alpha_2}{2}c\left(c-\frac{1}{n}\right)\right)+\left(1-p\right)}\left(\alpha_1+\alpha_2 c-\frac{\alpha_2}{2n}\right).
\end{equation}
Compare $g'(c)$ against $\lambda'(c)$, where $\lambda(c)$ is defined as in (\ref{lamb}),
\begin{equation}
\lambda'(c)=\frac{p(1-p)\exp\left(\alpha_1 c+\frac{\alpha_2}{2}c^2\right)}{p\exp\left(\alpha_1 c+\frac{\alpha_2}{2}c^2\right)+\left(1-p\right)}\left(\alpha_1+\alpha_2 c\right).
\end{equation}
Via standard analytical argument as in the proof of Lemma \ref{expmagnetization}, the difference $g'(c)-\lambda'(c)=O(1/n)$. Therefore
		\begin{align}
\label{sup}
			\ER\left( \rho(X_1,Y_1) \right) &\le 1 - \frac{1}{n} + \frac{1}{n} \sum_{j \neq i} \left(\frac{1}{n}\sup_{0 \le c \le 1}\lambda'(c)  +O\left(\frac{1}{n^2}\right)\right) \notag\\
				&\le 1 - \frac{1}{n} + \frac{n-1}{n^2}\sup_{0 \le c \le 1} \lambda'(c)+O\left(\frac{1}{n^2}\right)  \notag\\
				&\le 1 - \frac{1 - \sup_{0 \le c \le 1} \lambda'(c)}{n}+O\left(\frac{1}{n^2}\right).
		\end{align}
Define $\delta>0$ as $\delta= \left(1 - \sup_{0 \le c \le 1} \lambda'(c)\right)/2$. Then for $n$ sufficiently large, we obtain
\begin{equation}
\ER\left( \rho(X_1,Y_1) \right) \le 1 - \frac{\delta}{n} \le e^{-\delta/n}.
\end{equation}
\end{proof}

The requirement $\sup_{0 \le c \le 1} \lambda'(c) < 1$ in Lemma \ref{couplinglemma} may be weakened. By Corollary \ref{cstarattraction}, in the high temperature phase, with exponentially high probability, after $O(n)$ time steps the associated magnetization of all configurations are within an $\varepsilon$-neighborhood of the unique solution $c^*$ of $\lambda$ with $\lambda'(c^*)<1$. The supremum referenced in (\ref{sup}) thus need not be taken over the entire interval $[0, 1]$ but just $[c^*-\varepsilon, c^*+\varepsilon]$, and is guaranteed to be less than $1$ using smoothness of $\lambda$. Now take any two configurations $X, Y \in \Xc$ with $\rho(X, Y)=k$, where $1\leq k\leq n$. (At worst $X(i)=0$ and $Y(i)=1$ for all $i\in \{1, \ldots, n\}$.) There is a sequence of states $X_0, \ldots, X_k$ such that $X_0=X$, $X_k=Y$, and each neighboring pair $X_i, X_{i+1}$ are unit Hamming distance apart. Applying Lemma \ref{couplinglemma} for configurations at unit distance, we have $\ER\left(\rho(X_1,Y_1)\right)\leq ne^{-\delta/n}$.
Iterating gives
\begin{equation}
\label{contraction}
\ER\left(\rho(X_t,Y_t)\right)\leq ne^{-\delta t/n}.
\end{equation}

\begin{theorem}
\label{match}
In the high temperature phase, the mixing time for the Glauber dynamics is $O(n\log n)$.
\end{theorem}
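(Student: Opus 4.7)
My plan is to combine (i) a burn-in of length $T_1 = O(n)$ that places both coupled chains into a good magnetization window, with (ii) the one-step contraction (\ref{contraction}), via (iii) the standard coupling inequality. The skeleton is: run the two chains for $T_1$ steps so that Corollary \ref{cstarattraction} brings their magnetizations within $\varepsilon$ of the unique attractor $c^*$, then run the monotone path coupling for a further $T_2 = O(n\log n)$ steps, and finally read off the mixing time from the Hamming-distance bound.

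More concretely, first I fix $\varepsilon > 0$ small enough that $\kappa := \sup_{c \in [c^*-\varepsilon,c^*+\varepsilon]}\lambda'(c) < 1$, which is possible by smoothness of $\lambda$ and the hypothesis $\lambda'(c^*)<1$. By Corollary \ref{cstarattraction}, there is $\alpha>0$ such that for any starting state the magnetization lies in $[c^*-\varepsilon,c^*+\varepsilon]$ at every time $t \ge T_1 := \alpha n$ with probability $1 - e^{-\Omega(\sqrt n)}$; a union bound over the (polynomially many) times needed, or a direct appeal to the long-time trapping statement in Corollary \ref{cstarattraction2}, keeps both chains trapped in this window for all times up to $e^{\beta\sqrt n}$ with the same quality of probability. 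On this trapping event, the argument of Lemma \ref{couplinglemma} applies with the supremum replaced by $\kappa$, so the one-step contraction $\ER(\rho(X_{t+1},Y_{t+1})\mid X_t,Y_t) \le (1-\delta/n)\rho(X_t,Y_t)$ with $\delta = (1-\kappa)/2$ holds at every relevant step, and iterating as in (\ref{contraction}) yields
\begin{equation}
\ER(\rho(X_{T_1+s}, Y_{T_1+s})) \le n\, e^{-\delta s/n} + n\, e^{-\Omega(\sqrt n)}
\end{equation}
for any coupled initial pair $X,Y$, where the second term absorbs the bad trapping event (Hamming distance is at most $n$).

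Next, choose $T_2 = \lceil \delta^{-1}\log(8n) \rceil \cdot n = O(n\log n)$, which makes the first term at most $1/8$. The coupling inequality gives
\begin{equation}
d(T_1+T_2) \le \max_{X,Y \in \Xc} \PR_{X,Y}(X_{T_1+T_2} \neq Y_{T_1+T_2}) \le \max_{X,Y \in \Xc} \ER_{X,Y}(\rho(X_{T_1+T_2},Y_{T_1+T_2})) \le \tfrac14
\end{equation}
for $n$ sufficiently large, hence $t_{\text{mix}} \le T_1 + T_2 = O(n\log n)$. Finally, to pass from unit Hamming distance (as in Lemma \ref{couplinglemma}) to general pairs $X,Y$, I would interpolate through a chain $X = Z_0, Z_1, \dots, Z_k = Y$ of configurations with $\rho(Z_i,Z_{i+1})=1$ and use subadditivity of the coupling expectation, exactly as in the derivation of (\ref{contraction}).

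The main obstacle, in my view, is the bookkeeping needed to make the restricted-supremum argument rigorous: the one-step contraction is only valid when the current magnetizations of both chains are in $[c^*-\varepsilon,c^*+\varepsilon]$, so one must control these magnetizations throughout the entire contraction window rather than merely at the terminal time. Corollary \ref{cstarattraction2} is tailor-made for this, giving trapping up to time $e^{\beta\sqrt n}$ which comfortably exceeds the $T_1+T_2 = O(n\log n)$ horizon, and this is what makes the error term in the displayed bound above negligible. Everything else (Markov/coupling inequality, geometric decay, choice of $T_2$) is routine.
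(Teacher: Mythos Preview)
Your proposal is correct and follows essentially the same route as the paper: an $O(n)$ burn-in via Corollary~\ref{cstarattraction} to localize the magnetization near $c^*$, followed by the path-coupling contraction of Lemma~\ref{couplinglemma} with the supremum restricted to $[c^*-\varepsilon,c^*+\varepsilon]$, yielding (\ref{contraction}) and hence $t_{\text{mix}}=O(n\log n)$. The paper's written proof is terser (it simply invokes (\ref{contraction}) and the standard path-coupling mixing bound from \cite{MCMT}), while you are more explicit about the trapping bookkeeping; one small caveat is that Corollary~\ref{cstarattraction2} as stated is for the low-temperature phase, but the same trapping argument applies verbatim to the unique attractor in the high-temperature phase.
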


\begin{proof}
By Theorem 14.6 and Corollary 14.7 of \cite{MCMT}, (\ref{contraction}) implies
		\begin{equation} \label{tmixup}
			t_{\text{mix}}(\varepsilon) \le \left\lceil \frac{n\left( \log n - \log \varepsilon \right)}{\delta}  \right\rceil.
		\end{equation}
Setting $\varepsilon=1/4$,
\begin{equation}
t_{\text{mix}} \le \left\lceil \frac{n\left( \log n + \log 4 \right)}{\delta}  \right\rceil.
\end{equation}
\end{proof}

Next in Theorem \ref{matching}, by checking the total variance distance from the stationary distribution at time $t^*:=(n\log n)/4$, we establish a matching lower bound $\Omega(n \log n)$ for the Glauber dynamics in the high temperature phase. Together with Theorem \ref{match}, the correct order for the mixing time, $\Theta(n\log n)$, is validated.

\begin{theorem}
\label{matching}
In the high temperature phase, the mixing time for the Glauber dynamics is $\Omega(n\log n)$.
\end{theorem}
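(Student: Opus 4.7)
The plan is to apply the spectral (Wilson) distinguishing-statistic method to the normalized magnetization. Since $(c_t)$ is itself a Markov chain (Section \ref{mag}) and a deterministic projection of the Glauber dynamics, any lower bound on $t_{\text{mix}}$ of the magnetization chain transfers to the full chain. A Taylor expansion of $\lambda$ at the unique attractor $c^*$ combined with Lemma \ref{expmagnetization} gives
\begin{equation*}
\ER(c_{t+1} - c^* \mid c_t) \;=\; \mu\,(c_t - c^*) \;+\; O\!\left(\tfrac{(c_t-c^*)^2}{n} + \tfrac{1}{n^2}\right),
\qquad \mu := 1 - \tfrac{\alpha}{n},\;\; \alpha := 1 - \lambda'(c^*) > 0,
\end{equation*}
exhibiting $\phi(X) := c(X) - c^*$ as an approximate eigenfunction of the Glauber transition kernel with eigenvalue $\mu$. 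The deterministic bound $|c_{t+1}-c_t|\le 1/n$ yields one-step conditional variance $R = O(1/n^2)$.

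I would initialize the chain at the all-zeros configuration $X_0$, so that $\phi(X_0) = -c^*$ is of constant magnitude, and apply Azuma--Hoeffding to the compensated martingale
\begin{equation*}
M_t \;:=\; (c_t - c_0) - \sum_{s < t} \tfrac{1}{n}\bigl(\lambda(c_s) - c_s\bigr),
\end{equation*}
whose quadratic variation is bounded by $t/n^2$, to show that $c_t$ tracks the deterministic mean-field trajectory $f(t)$ defined by $f(0)=0$, $f(t+1) = f(t) + (\lambda(f(t))-f(t))/n$, up to $O(\sqrt{(\log n)/n})$ with high probability. Linearizing this recursion near $c^*$ gives $c^* - f(t) \asymp c^*\mu^t$. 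Choosing $t^* = \lfloor c\,n \log n \rfloor$ with $c < 1/(2\alpha)$ forces $c^* - f(t^*) \gg n^{-1/2}$, so that $c(X_{t^*}) \le c^* - C n^{-1/2}$ with probability $1-o(1)$. Under $\pi$, Wilson's stationary estimate gives $\mathrm{Var}_\pi(c) \le R/(1-\mu^2) = O(1/(n\alpha))$. A Chebyshev comparison on the discriminating event $A = \{c(X) \le c^* - Cn^{-1/2}\}$ with $C$ large then yields $\|P^{t^*}(X_0, \cdot) - \pi\|_{\text{TV}} > 1/4$, so $t_{\text{mix}} > t^* = \Omega(n\log n)$.

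The principal obstacle is the nonlinear $O((c-c^*)^2/n)$ correction in the drift, which is of the same order as the linear term whenever $|c-c^*|$ is a constant. I would handle this by splitting the trajectory into a burn-in phase of $O(n)$ steps (during which Corollary \ref{cstarattraction} drives $c_t$ into an $\varepsilon$-neighborhood of $c^*$ with exponentially high probability) and a subsequent linearized phase on which the nonlinear correction merely degrades the effective contraction rate to $\alpha - O(\varepsilon)$, still positive. Neither adjustment disturbs the $\Omega(n\log n)$ order, and the resulting bound matches the coupling upper bound of Theorem \ref{match}. Heuristically it reflects the coupon-collector phenomenon that each of the $n$ vertices must receive $\Omega(\log n)$ updates before its marginal can diffuse to equilibrium.
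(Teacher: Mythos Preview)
Your approach is correct in outline but genuinely different from the paper's.  The paper does not track a mean-field trajectory at all.  Instead it runs a monotone coupling of two chains started from the extreme configurations $X^+$ (all ones) and $X^-$ (all zeros), and uses a coupon-collector estimate: at time $t^* = (n\log n)/4$ the number $R_{t^*}$ of vertices never selected is $\Omega(n^{3/4})$ with high probability, and monotonicity forces $f(X^+_{t^*}) - f(X^-_{t^*}) \ge R_{t^*}$.  For the stationary side, rather than Wilson's eigenfunction variance bound, the paper bootstraps from the already-proved upper bound: the Dirichlet form of $f$ is at most $1/2$, so $\mathrm{Var}_\pi(f) \le 1/(2\gamma) \le t_{\text{mix}}/(2\log 2) = O(n\log n)$, hence under $\pi$ the magnetization concentrates in an $n^{2/3}$-window.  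Since $n^{3/4} \gg n^{2/3}$, at least one of $X^\pm_{t^*}$ is far from equilibrium.

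Your Wilson/Azuma route is more quantitative---it can in principle pin down the leading constant in $t_{\text{mix}}$ and does not rely on having proved the upper bound first---but it demands more bookkeeping.  One point to tighten: the invocation ``Wilson's stationary estimate gives $\mathrm{Var}_\pi(c) \le R/(1-\mu^2)$'' is literally correct only for exact eigenfunctions, and your $\phi$ carries an $O((c-c^*)^2/n)$ error that must be propagated through that step as well, not just through the drift analysis.  (Alternatively you can read off $\mathrm{Var}_\pi(c) = O(1/n)$ directly from the Laplace asymptotics of $\pi(A_k)$ in Lemma~\ref{logacnlemma}, since $\varphi$ has a nondegenerate maximum at $c^*$.)  The paper's coupon-collector trick sidesteps all of this nonlinear analysis by working with the raw count of unrefreshed sites, at the cost of a cruder stationary variance estimate.
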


\begin{proof}
Let $f(X)$ count the number of vertices with spin $1$ in a configuration $X \in \Xc$. Denote by $\gamma$ the spectral gap associated with the Glauber dynamics. By Lemma 13.12 and Remark 13.13 of \cite{MCMT},
		\begin{equation} \label{gapinf}
			\gamma \leq \frac{\mathbb{\varepsilon}(f)}{\mathbb{V}ar_{\pi}(f)},
		\end{equation}
	where the Dirichlet form
\begin{equation}
\mathbb{\varepsilon}(f) = \frac{1}{2} \sum_{X,Y \in \Xc} \left( f(X) - f(Y) \right)^2 \pi(X) P(X,Y),
\end{equation}
and the variance under stationary distribution
\begin{equation}
\mathbb{V}ar_{\pi}(f)=\sum_{X\in \Xc} \left(f(X)\right)^2 \pi(X)-\left(\sum_{X\in \Xc} f(X)\pi(X)\right)^2.
\end{equation}
From the mechanism described in Section \ref{model}, for configurations $X, Y\in \Xc$, $P(X, Y)$ (\ref{transmat}) is zero unless $X$ and $Y$ differ at at most one vertex. This implies that
\begin{equation}
\mathbb{\varepsilon}(f)\leq \frac{1}{2} \sum_{X,Y \in \Xc} \pi(X)P(X, Y)=\frac{1}{2} \sum_{X\in \Xc}\pi(X)=\frac{1}{2},
\end{equation}
and when applied to (\ref{gapinf}), further implies that $2\mathbb{V}ar_\pi(f)\leq 1/\gamma$. Hence
\begin{equation}
\log 2 \left( 2\mathbb{V}ar_{\pi}(f) - 1 \right) \le \log 2 \left( \frac{1}{\gamma} - 1 \right) \le t_{\text{mix}},
\end{equation}
where the second inequality uses spectral representation techniques (for details, see for example Theorem 12.4 of \cite{MCMT}). By Theorem \ref{match}, $t_{\text{mix}} = O(n\log n)$, which then gives $\mathbb{V}ar_{\pi}(f) = O(n \log n)$. Let configuration $X$ be chosen according to the stationary distribution $\pi$. By Chebyshev's inequality,
\begin{equation}
\pi\left( \left| f(X) - \ER_{\pi}(f(X)) \right| > n^{2/3} \right) \le \frac{\mathbb{V}ar_{\pi}(f(X))}{n^{4/3}} = O(n^{-1/3} \log n).
\end{equation}
Therefore asymptotically
\begin{equation}
\label{bd1}
\pi\left( \left| f(X) - \ER_{\pi}(f(X)) \right| \le n^{2/3} \right) \rightarrow 1.
\end{equation}

Let $X^{+}, X^- \in \Xc$ be configurations such that $X^{+}(i)=1$ and $X^-(i)=0$ for every $i\in \{1, \ldots, n\}$. Assume that $X^+$ and $X^-$ are coupled using the standard monotone coupling as described in the proof of Lemma \ref{couplinglemma}. So $X^+_t \ge X^-_t$ for all $t$. Let $R_{t^{\ast}}$ denote the number of vertices not yet selected by the Glauber Dynamics by time $t^{\ast}$. By a coupon collecting argument,
\begin{equation}
\ER(R_{t^*}) \asymp n^{3/4} \text{ and }\mathbb{V}ar(R_{t^*}) \leq \ER(R_{t^*}).
\end{equation}
(For details, see for example Lemma 7.12 of \cite{MCMT}.) Let $\varepsilon > 0$. By Chebyshev's inequality, asymptotically
		\begin{equation}
			\PR\left( \left| R_{t^{\ast}} - \ER\left( R_{t^{\ast}} \right) \right| > (1-\varepsilon) \ER\left( R_{t^{\ast}} \right) \right) \le \frac{ \mathbb{V}ar\left( R_{t^{\ast}} \right) }{ (1-\varepsilon)^2 \left(\ER\left( R_{t^{\ast}} \right)\right)^2  }\leq O(n^{-3/4}),
		\end{equation}
which using set containment implies that
\begin{equation}
\PR\left( R_{t^{\ast}}< \varepsilon\ER\left(R_{t^{\ast}}\right)\right) \le \PR\left(\left| R_{t^{\ast}} - \ER\left( R_{t^{\ast}} \right) \right| > (1-\varepsilon) \ER\left( R_{t^{\ast}} \right) \right) \leq O(n^{-3/4}).
\end{equation}
It follows that $R_{t^{\ast}} = \Omega(n^{3/4})$ with probability tending to $1$ asymptotically. Since $f(X^{+}_{t^{\ast}}) - f(X^{-}_{t^{\ast}})\geq R_{t^*}$, we conclude that $f(X^{+}_{t^{\ast}}) - f(X^{-}_{t^{\ast}})= \Omega(n^{3/4})$ with probability tending to $1$ asymptotically.

Define sets $A$ and $B$ respectively as
\begin{equation}
A:=\left\{\left| f(X^{+}_{t^{\ast}}) - \ER_{\pi}(f(X)) \right| \leq n^{2/3}\right\}
\end{equation}
and
\begin{equation}
B:=\left\{\left| f(X^{-}_{t^{\ast}}) - \ER_{\pi}(f(X)) \right| \leq n^{2/3}\right\}.
\end{equation}
Then by the triangle inequality, their intersection $A\cap B$, if nonempty, satisfies $f(X^{+}_{t^{\ast}}) - f(X^{-}_{t^{\ast}}) = O(n^{2/3})$. Since $n^{3/4}>n^{2/3}$, this contradicts with what was established in the previous paragraph. Therefore the sets $A$ and $B$ are asymptotically disjoint and so one of them has probability bounded above by $1/2 - o(1)$. Without loss of generality, say
\begin{equation}
\label{bd2}
\PR\left( \left| f(X^{+}_{t^{\ast}}) - \ER_{\pi}(f(X)) \right| \leq n^{2/3} \right) \le \frac{1}{2} - o(1).
\end{equation}
By definition of the total variation distance and the bounds (\ref{bd1}) (\ref{bd2}),
\begin{equation}
\norm{P^{t^{\ast}}(X^{+},{\cdot}) - \pi}_{\text{TV}} \geq \left| P^{t^{\ast}}(X^{+}, A) - \pi(A) \right| \ge 1 - \frac{1}{2} + o(1).
\end{equation}
Since at $t_{\text{mix}}$, the default for $t_{\text{mix}}(\varepsilon)$, the distance must be less than or equal to $1/4$, this shows that the mixing time is asymptotically bigger than $t^{\ast} = (n \log n)/4$ proving the lower bound.
\end{proof}

\section{Slow mixing at low temperature}
\label{slow}
In this section we study the mixing time of the Glauber dynamics in the low temperature phase. Rather than analyzing the spin update probability $\lambda$ (\ref{lamb}) directly, we find asymptotic expressions for components of the partition function $Z$ (see Definition \ref{def}). For $k\in \{0, \ldots, n\}$, define $A_k = \{ X : \left| \{ i : X(i) = 1 \} \right| = k \}$. That is, the set $A_k$ consists of spin configurations $X\in \Xc$ whose number of vertices with spin $1$ is $k$ and number of vertices with spin $0$ is $n - k$. Then
	\begin{equation} \label{piak}
		\pi(A_k) = \frac{1}{Z} {n \choose k} \exp\left( \frac{\alpha_1}{n} {k \choose 2} + \frac{\alpha_2}{n^2} {k \choose 3} \right) p^k (1-p)^{n-k}.
	\end{equation}
Let $a_k$ be such that $a_k = Z \pi(A_k)$. Notice that $\sum_{k=0}^n a_k=Z$.

\begin{lemma} \label{logacnlemma}
	Let $c\in [0, 1]$ and $a_k$ be defined as above, we have
		\begin{equation} \label{logacn}
			\log(a_{\lfloor cn \rfloor}) = n (\varphi_{p, \alpha_1, \alpha_2}(c) + o(1)),
		\end{equation}
	where
		\begin{equation} \label{phifunc}
			\varphi(c) = \frac{\alpha_1}{2}c^2 + \frac{\alpha_2}{6}c^3 - c\log \frac{c}{p} - (1-c) \log \frac{1-c}{1-p}.
		\end{equation}
\end{lemma}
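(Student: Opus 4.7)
The proof is essentially an application of Stirling's formula together with routine asymptotic expansions, since $a_k$ is an explicit product of a binomial coefficient, two exponentials, and a Bernoulli weight.

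The plan is to write $k = \lfloor cn \rfloor$, take the logarithm of the definition
\begin{equation*}
a_k = \binom{n}{k}\exp\!\left(\tfrac{\alpha_1}{n}\tbinom{k}{2} + \tfrac{\alpha_2}{n^2}\tbinom{k}{3}\right) p^k (1-p)^{n-k},
\end{equation*}
and estimate each of the four resulting summands to precision $o(n)$. First I would handle the combinatorial factor using the standard Stirling bound $\log\binom{n}{k} = -n\bigl[\tfrac{k}{n}\log\tfrac{k}{n} + \tfrac{n-k}{n}\log\tfrac{n-k}{n}\bigr] + O(\log n)$, and since $k/n = c + O(1/n)$, continuity of $x \mapsto x\log x$ on $[0,1]$ (with the convention $0\log 0 = 0$) yields $\log\binom{n}{k} = n\bigl(-c\log c - (1-c)\log(1-c)\bigr) + o(n)$.

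Next I would expand the polynomial pieces in the exponent: $\tfrac{\alpha_1}{n}\binom{k}{2} = \tfrac{\alpha_1 k(k-1)}{2n} = \tfrac{\alpha_1}{2}c^2 n + o(n)$ and $\tfrac{\alpha_2}{n^2}\binom{k}{3} = \tfrac{\alpha_2 k(k-1)(k-2)}{6n^2} = \tfrac{\alpha_2}{6}c^3 n + o(n)$, using only the elementary fact that $k = cn + O(1)$. Finally, the Bernoulli weight contributes $k\log p + (n-k)\log(1-p) = n\bigl(c\log p + (1-c)\log(1-p)\bigr) + o(n)$.

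Summing the four contributions and grouping the logarithmic terms via $-c\log c + c\log p = -c\log(c/p)$ and $-(1-c)\log(1-c) + (1-c)\log(1-p) = -(1-c)\log\!\bigl(\tfrac{1-c}{1-p}\bigr)$, we arrive at
\begin{equation*}
\log a_{\lfloor cn\rfloor} = n\!\left(\tfrac{\alpha_1}{2}c^2 + \tfrac{\alpha_2}{6}c^3 - c\log\tfrac{c}{p} - (1-c)\log\tfrac{1-c}{1-p}\right) + o(n) = n\bigl(\varphi(c) + o(1)\bigr).
\end{equation*}

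There is no real obstacle here; the only mild care point is uniformity at the endpoints $c \in \{0,1\}$, where $c\log c$ and $(1-c)\log(1-c)$ must be interpreted as $0$. Since $k$ is an integer in $\{0,\dots,n\}$, the worst case is $k \in \{0, n\}$, where $\binom{n}{k} = 1$ and all expansions collapse to trivial identities consistent with $\varphi(0) = \log(1-p)$ and $\varphi(1) = \tfrac{\alpha_1}{2} + \tfrac{\alpha_2}{6} + \log p$, so (\ref{logacn}) holds uniformly in $c \in [0,1]$.
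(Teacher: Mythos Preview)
Your proof is correct and follows essentially the same approach as the paper: apply Stirling's formula to the binomial coefficient and combine with the explicit exponential and Bernoulli-weight terms. Your treatment is in fact slightly more careful than the paper's, since you explicitly address the endpoint cases $c\in\{0,1\}$, whereas the paper's asymptotic for $\binom{n}{k}$ involves a factor $\sqrt{c(1-c)}$ in the denominator and tacitly assumes $c\in(0,1)$.
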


\begin{proof}
Stirling's formula states that
\begin{equation}
n! \asymp \sqrt{2\pi} e^{-n} n^n \sqrt{n}.
\end{equation}
Setting $k = \lfloor cn \rfloor$, the binomial coefficient admits an asymptotic formula:
\begin{equation}
			{n \choose k} \asymp \frac{1}{\sqrt{2\pi}c^{cn}\sqrt{c}(1-c)^{(1-c)n}\sqrt{1-c}\sqrt{n}}.
\end{equation}
Hence
		\begin{align}
			\log(a_{\lfloor cn \rfloor}) & \asymp n\left( \frac{\alpha_1}{2}c^2  + \frac{\alpha_2}{6} c^3  + c\log{p} + (1-c)\log(1-p) - c\log{c} - (1-c)\log(1-c) \right) \notag \\
				&\text{\hspace{1cm}}- \log{\left( \sqrt{2\pi c(1-c) n} \right)} \notag \\
				&= n (\varphi(c) + o(1)).
		\end{align}
\end{proof}

Next in Lemmas \ref{lem1} and \ref{lem2}, we reveal a deep relationship between $\varphi$ (whose derivative is commonly referred to as the ``free energy density'') and the spin update probability $\lambda$. As we will see, local maximizers for $\varphi$ correspond to fixed points of $\lambda$, and concavity of $\varphi$ at the local maximizer (indicating whether it is a local maximum or minimum) translates to the attractor/repellor characterization on the fixed point of $\lambda$ previously described in Section \ref{phase}.

\begin{lemma}
\label{lem1}
Let $\lambda$ and $\varphi$ be respectively defined as in (\ref{lamb}) and (\ref{phifunc}). Then
$\lambda(c) = c  \iff  \varphi'(c) = 0$.
\end{lemma}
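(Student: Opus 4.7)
The plan is a direct computation showing the two equations $\lambda(c)=c$ and $\varphi'(c)=0$ are algebraically equivalent. There is no real obstacle here; the whole content is differentiation and rearrangement, so the main thing to be careful about is domain issues (the log terms force $c \in (0,1)$, but this is harmless since both $\lambda(c)=c$ and the relevant stationary points lie in the open interval).

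First I would differentiate $\varphi$ term by term. The polynomial part contributes $\alpha_1 c + \tfrac{\alpha_2}{2} c^2$. For the entropy-like pieces, the product rule gives
\begin{equation}
\frac{d}{dc}\bigl[-c\log(c/p)\bigr] = -\log(c/p) - 1,\qquad \frac{d}{dc}\bigl[-(1-c)\log((1-c)/(1-p))\bigr] = \log((1-c)/(1-p)) + 1,
\end{equation}
and the two $\pm 1$ terms cancel. Therefore
\begin{equation}
\varphi'(c) = \alpha_1 c + \frac{\alpha_2}{2}c^2 + \log\frac{p(1-c)}{c(1-p)}.
\end{equation}

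Second, I would show the equivalence. Setting $\varphi'(c) = 0$ and exponentiating gives
\begin{equation}
p(1-c)\exp\!\left(\alpha_1 c + \frac{\alpha_2}{2}c^2\right) = c(1-p). \tag{$\ast$}
\end{equation}
On the other hand, from the definition of $\lambda$, the equation $\lambda(c)=c$ is equivalent to
\begin{equation}
p\exp\!\left(\alpha_1 c + \frac{\alpha_2}{2}c^2\right) = c\left[p\exp\!\left(\alpha_1 c + \frac{\alpha_2}{2}c^2\right) + (1-p)\right],
\end{equation}
which, after moving the $c$-multiplied term to the left and factoring, is exactly $(\ast)$. This establishes the biconditional.

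The only remaining remark I would make is that since $\lambda(0) = p/(p+(1-p)\cdot 1\cdot \text{stuff}) > 0$ and $\lambda(1) < 1$, any fixed point of $\lambda$ lies in $(0,1)$, so the logarithms in $\varphi'(c)$ are well-defined at every such $c$, and the rearrangement above is valid with no boundary subtleties. This makes the equivalence clean and also explains the phrasing in the subsequent lemma, which identifies concavity of $\varphi$ with the attractor/repellor dichotomy for $\lambda$.
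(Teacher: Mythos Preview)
Your proof is correct and is essentially the same direct computation as the paper's: both compute $\varphi'(c) = \alpha_1 c + \tfrac{\alpha_2}{2}c^2 + \log\frac{p(1-c)}{c(1-p)}$ and note this vanishes precisely when $\frac{p}{1-p}\exp(\alpha_1 c + \tfrac{\alpha_2}{2}c^2) = \frac{c}{1-c}$, which is a rearrangement of $\lambda(c)=c$. Your extra remark about the domain $(0,1)$ is a welcome clarification that the paper leaves implicit.
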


\begin{proof}
We have the following string of equivalences
	\begin{align}
		\lambda(c)=c &\iff \left( \frac{p}{1-p} \right)\exp\left( \alpha_1 c + \frac{\alpha_2}{2} c^2 \right) = \frac{c}{1-c} \notag \\
			&\iff \alpha_1 c + \frac{\alpha_2}{2} c^2 + \log \frac{p}{1-p} - \log \frac{c}{1-c} = 0 \notag \\
			&\iff \varphi'(c) = 0.
	\end{align}
\end{proof}

\begin{lemma}
\label{lem2}
Let $\lambda$ and $\varphi$ be respectively defined as in (\ref{lamb}) and (\ref{phifunc}). Suppose $\varphi'(c) = 0$. Then $\varphi''(c) > 0 \iff \lambda'(c) > 1$, $\varphi''(c) < 0 \iff \lambda'(c) < 1$, and $\varphi''(c) = 0 \iff \lambda'(c) = 1$.
\end{lemma}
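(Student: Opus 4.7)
My plan is to reduce both $\varphi''(c)$ and $\lambda'(c)-1$ to a common expression via direct differentiation, and show that when $\varphi'(c)=0$ they differ only by the strictly positive factor $c(1-c)$.

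First I would differentiate the formula for $\varphi'(c)$ that was already extracted in the proof of Lemma \ref{lem1}, namely
\begin{equation}
\varphi'(c) = \alpha_1 c + \frac{\alpha_2}{2} c^2 + \log\frac{p}{1-p} - \log\frac{c}{1-c},
\end{equation}
to obtain
\begin{equation}
\varphi''(c) = \alpha_1 + \alpha_2 c - \frac{1}{c(1-c)}.
\end{equation}
Next I would differentiate $\lambda$ using the logarithmic-derivative trick: since $\log\lambda(c) - \log(1-\lambda(c)) = \log\frac{p}{1-p} + \alpha_1 c + \frac{\alpha_2}{2}c^2$, differentiating both sides gives $\lambda'(c) = \lambda(c)(1-\lambda(c))(\alpha_1 + \alpha_2 c)$.

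Now invoke the hypothesis $\varphi'(c)=0$. By Lemma \ref{lem1} this is equivalent to $\lambda(c)=c$, so the expression for $\lambda'(c)$ collapses to $\lambda'(c)=c(1-c)(\alpha_1+\alpha_2 c)$. Multiplying $\varphi''(c)$ by the positive quantity $c(1-c)$ (note $c\in(0,1)$ because $\lambda(c)=c$ and $\lambda$ takes values in $(0,1)$) therefore yields the identity
\begin{equation}
c(1-c)\,\varphi''(c) = c(1-c)(\alpha_1+\alpha_2 c) - 1 = \lambda'(c) - 1.
\end{equation}
Since $c(1-c)>0$, the sign of $\varphi''(c)$ matches the sign of $\lambda'(c)-1$ exactly, giving all three claimed equivalences at once.

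There is no real obstacle here; the only thing to be careful about is confirming $c\in(0,1)$ so that $c(1-c)$ is strictly positive (hence the direction of the inequalities is preserved and division is legal), and justifying the logarithmic-derivative manipulation of $\lambda$. Both follow immediately from the definitions: $\lambda$ maps $[0,1]$ into $(0,1)$, so any fixed point lies in $(0,1)$, and $\lambda(c)\in(0,1)$ makes $\log\lambda(c)$ and $\log(1-\lambda(c))$ well defined for the derivative computation.
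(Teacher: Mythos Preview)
Your proof is correct and follows essentially the same approach as the paper: compute $\varphi''(c)=\alpha_1+\alpha_2 c-\frac{1}{c(1-c)}$, use Lemma~\ref{lem1} to reduce $\lambda'(c)$ to $c(1-c)(\alpha_1+\alpha_2 c)$ at a fixed point, and compare. Your version is slightly more detailed in justifying the derivative of $\lambda$ and the positivity of $c(1-c)$, but the argument is the same.
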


\begin{proof}
Suppose $\varphi'(c) = 0$. From Lemma \ref{lem1}, $\lambda(c)=c$. Therefore
\begin{equation}
		\lambda'(c) = c(1-c) \left( \alpha_1 + \alpha_2 c \right).
\end{equation}
The claim readily follows since
\begin{equation}
\varphi''(c) = \alpha_1 + \alpha_2 c - \frac{1}{c(1-c)}.
\end{equation}
\end{proof}

Using a conductance argument whose idea goes back at least to Griffiths et al. \cite{G}, we now show that in the region where $\varphi(c)$ has at least two local maximizers, the mixing time for the Glauber dynamics is at least exponential. In the language of $\lambda$, this establishes exponentially slow mixing of the Glauber dynamics when $\lambda(c)=c$ has at least two solutions $c$ satisfying $\lambda'(c)<1$. Define the bottleneck ratio (Cheeger constant) of a Markov chain with stationary distribution $\pi$ as
\begin{equation}
\Phi_{\ast} = \min_{S: \pi(S) \le 1/2} \frac{Q(S,S^c)}{\pi(S)},
\end{equation}
where $S$ is a set in the configuration space and $Q$ is the edge measure given by
\begin{equation}
Q(X,Y) = \pi(X)P(X,Y) \text{ and } Q(A,B) = \sum_{X \in A, Y \in B} Q(X,Y).
\end{equation}
Recall that $P(X, Y)$ (\ref{transmat}) is the transition probability from configuration $X$ to configuration $Y$, and so $Q(A, B)$ is the probability of moving from set $A$ to set $B$ in one step of the chain when starting from the stationary distribution.

\begin{theorem}
\label{slowmix}
In the low temperature phase, the mixing time for the Glauber dynamics is $e^{\Omega(n)}$.
\end{theorem}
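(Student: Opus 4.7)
The plan is to invoke the standard bottleneck (Cheeger) bound $t_{\text{mix}} \geq 1/(4\Phi_*)$ (see, e.g., Theorem 7.3 of \cite{MCMT}) applied to a set that separates the two attractors of the dynamics. In the low temperature phase, $\lambda(c)=c$ has (at least) two solutions $c_1^* < c_2^*$ with $\lambda'(c_i^*)<1$. By Lemmas \ref{lem1} and \ref{lem2} these correspond to two distinct local maxima of $\varphi$; by continuity there must exist an intermediate point $\tilde c \in (c_1^*,c_2^*)$ (a local minimum of $\varphi$) with $\varphi(\tilde c) < \min(\varphi(c_1^*),\varphi(c_2^*))$. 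This strict inequality is the mechanism that will drive exponential slowdown.

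The natural bottleneck set is the level slab $S = \{X \in \Xc : \omega(X) \leq \lfloor \tilde c\, n\rfloor\}$; by swapping $S$ and $S^c$ if necessary, I may assume $\pi(S) \leq 1/2$. Because one step of Glauber dynamics changes $\omega$ by at most one (cf.\ (\ref{transmat})), any crossing from $S$ to $S^c$ must originate in the single layer $A_{\lfloor \tilde c n\rfloor}$, and summing $P(X,\cdot)$ over all one-flip neighbors gives at most $1$; hence $Q(S,S^c) \leq \pi(A_{\lfloor \tilde c n\rfloor})$. On the other hand $\pi(S) \geq \pi(A_{\lfloor c_1^* n\rfloor})$ since $c_1^* < \tilde c$ (or $\pi(S^c) \geq \pi(A_{\lfloor c_2^* n\rfloor})$ in the swapped case). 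Dividing, the partition function $Z$ cancels and by Lemma \ref{logacnlemma} the ratio reduces to
\begin{equation}
\Phi_* \leq \frac{\pi(A_{\lfloor \tilde c n\rfloor})}{\pi(A_{\lfloor c_1^* n\rfloor})} = \exp\bigl(n(\varphi(\tilde c) - \varphi(c_1^*)) + o(n)\bigr) = e^{-\Omega(n)},
\end{equation}
so the Cheeger bound yields $t_{\text{mix}} \geq 1/(4\Phi_*) = e^{\Omega(n)}$.

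The only mildly subtle point is ensuring that, in whichever of the two cases ($\pi(S) \leq 1/2$ or $\pi(S^c) \leq 1/2$) applies, the strict separation $\varphi(\tilde c) < \varphi(c_i^*)$ holds for the relevant $i$. This is automatic from the fact that a local minimum of a smooth function strictly between two local maxima must have value strictly below each of them. No additional analysis of $\lambda$ or of the burn-in period is needed; the argument is a direct consequence of the free-energy estimate of Lemma \ref{logacnlemma} combined with the single-site nature of the transition matrix, and I expect no serious obstacle beyond writing the case distinction cleanly.
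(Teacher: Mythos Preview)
Your proposal is correct and follows essentially the same strategy as the paper: a Cheeger/bottleneck bound exploiting that Glauber moves change $\omega$ by at most one, so the edge measure out of a magnetization slab is controlled by a single layer $\pi(A_k)$, whose exponential order is given by Lemma~\ref{logacnlemma}. The only cosmetic difference is the placement of the cut: the paper slices at $c_1+\varepsilon$ just past the smallest local maximizer of $\varphi$ (and pairs it with a symmetric set near $c_2$ to guarantee one side has $\pi\le 1/2$), whereas you slice at the intervening local minimum $\tilde c$; both choices yield $\Phi_*\le e^{-\Omega(n)}$ for the same reason, and your justification that $\varphi(\tilde c)<\min(\varphi(c_1^*),\varphi(c_2^*))$ (the maxima being strict by Lemma~\ref{lem2}) is sound.
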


\begin{proof}
Notice that $\varphi'(c) \rightarrow \infty$ as $c \rightarrow 0$ and $\varphi'(c) \rightarrow -\infty$ as $c \rightarrow 1$, so the local maximizers of $\varphi$ are contained in $(0,1)$. Let $c_1$ be the smallest and $c_2$ be the largest local maximizer of $\varphi$. We have $c_1 \neq c_2$. There exists $\varepsilon > 0$ such that for all $c < c_1$ and $c_1 < c \le c_1 + \varepsilon$, $\varphi(c) < \varphi(c_1)$, while for all $c_2 - \varepsilon \le c < c_2$ and $c > c_2$, $\varphi(c) < \varphi(c_2)$, with $c_1 + \varepsilon < c_2 - \varepsilon$. Define the following two sets
		\begin{equation} \label{setS1}
			S_1 = \{ A_0, \dots, A_{\lfloor (c_1 + \varepsilon)n \rfloor} \}
		\end{equation}
	and
		\begin{equation} \label{setS2}
			S_2 = \{ A_{\lfloor (c_2 - \varepsilon)n \rfloor}, \dots, A_n \}.
		\end{equation}
For $n$ large enough, $S_1 \cap S_2 = \varnothing$, $A_{\lfloor (c_1 + \varepsilon)n \rfloor} \neq A_{\lfloor c_1 n \rfloor}$, and $A_{\lfloor (c_2 - \varepsilon)n \rfloor} \neq A_{\lfloor c_2 n \rfloor}$. Since $S_1$ and $S_2$ are disjoint, at least one of them has probability bounded above by $1/2$. Without loss of generality, suppose $S_1$ is such that $\pi(S_1) \le \frac{1}{2}$. Then
\begin{equation}
\label{transprob}
\Phi_* \leq \frac{Q(S_1, S_1^{c})}{\pi(S_1)} = \frac{1}{\pi(S_1)} {\sum_{i=0}^{\lfloor (c_1 + \varepsilon)n \rfloor} \sum_{X\in A_i}\pi(X) \sum_{j= \lfloor (c_1 + \varepsilon)n \rfloor+1}^{n}\sum_{Y\in A_j} P(X, Y)}.
\end{equation}
Since a single step of the Glauber dynamics only changes the value of the normalized magnetization by at most $1/n$, the only non-zero transition probability in (\ref{transprob}) is the transition from $A_{\lfloor (c_1 + \varepsilon)n \rfloor}$ to $A_{\lfloor (c_1 + \varepsilon)n \rfloor}+1$. It follows that
\begin{align}
\Phi_{*} &\le \frac{1}{\pi(S_1)}{\sum_{i=0}^{\lfloor (c_1 + \varepsilon)n \rfloor} \sum_{X\in A_i}\pi(X)} \le \frac{\pi(A_{\lfloor (c_1 + \varepsilon)n \rfloor})}{\pi(A_{\lfloor c_1 n \rfloor})}\notag\\ &= \frac{a_{\lfloor (c_1 + \varepsilon)n \rfloor}}{a_{\lfloor c_1 n \rfloor}} \asymp \frac{e^{n \varphi(c_1 + \varepsilon)}}{e^{n \varphi(c_1)}} = e^{n (\varphi(c_1 + \varepsilon) - \varphi(c_1))},
\end{align}
where the asymptotics are derived in Lemma \ref{logacnlemma}. Define $\delta > 0$ as $\delta = \left(\varphi(c_1) - \varphi(c_1 + \varepsilon)\right)/2$. Then the bottleneck ratio satisfies $\Phi(S_1) \le e^{-\delta n}$. Using Theorem 7.3 of \cite{MCMT},
\begin{equation}
t_{\text{mix}} \ge \frac{1}{4\Phi_{\ast}} \ge \frac{e^{\delta n}}{4}=e^{\Omega(n)}.
\end{equation}
\end{proof}

Call a Markov chain local if at most $o(n)$ vertices are selected in each step. The argument used in the proof of Theorem \ref{slowmix} actually shows that in the low temperature phase, the mixing is exponentially slow for any local Markov chain, with the (single-site) Glauber dynamics being one such instance. We remark that there is a difference in the qualitative nature of the phase transition investigated in this paper as compared with that in the standard statistical physics literature. While the asymptotic phase transitions in the rigorous statistical physics sense occur at parameter values giving non-unique global maximizers of the free energy density, the asymptotic transition from high temperature phase to low temperature phase arises as a consequence of the non-uniqueness of local maximizers for the free energy density. This discrepancy may not come as a surprise, since in simulations it is often hard to distinguish between a local maximizer and a global maximizer and the algorithm may become trapped at a local maximizer; one solution might be to add controlled moves based on network geometry.

\section{Slower burn-in along critical curve}
\label{critical}
In this section we study the mixing time of the Glauber dynamics along the critical curve, corresponding to parameters for which $\lambda(c)=c$ admits a unique solution $c$ with $\lambda'(c)=1$. We first identify explicitly the high temperature vs. low temperature phase. As explained in Section \ref{slow}, the two phases may be alternatively determined by whether there is a unique local maximizer for $\varphi$ (\ref{phifunc}). The phase identification thus reduces to a $3$-dimensional intricate calculus problem. Though straight-forward as it sounds, various tricks are needed to solve it analytically. The crucial idea is to minimize the effect of the parameters $p, \alpha_1, \alpha_2$ on the free energy density $\varphi$ one by one. (See \cite{Yin2013} for more details of the calculation in a related model.) Denote by $l(c):=\varphi(c)-\log(1-p)$.

\begin{proposition}
\label{max}
	Fix $\alpha_2$. Consider the maximization problem for
\begin{equation}
l_{\alpha_2}(c; p,\alpha_1) = \left(\log\frac{p}{1-p}\right)c + \frac{\alpha_1}{2}c^2 + \frac{\alpha_2}{6}c^3 - c\log c - (1-c)\log(1-c)
\end{equation}
on the interval $[0,1]$, where $0 < p < 1$ and $-\infty < \alpha_1 < \infty$ are parameters. Then there is a V-shaped region in the $(p,\alpha_1)$-plane with corner point $(p^c,\alpha_1^c)$,
\begin{equation}
\label{p}
		p^c=\frac{\bar{c}\exp\bigg(\frac{4\bar{c}-3}{2(1-\bar{c})^2}\bigg)}{\bar{c}\exp\bigg(\frac{4\bar{c}-3}{2(1-\bar{c})^2}\bigg)+(1-\bar{c})},
\end{equation}
\begin{equation}
\label{alpha1}
			\alpha_1^c =  \frac{2-3\bar{c}}{\bar{c}(1-\bar{c})^2},
\end{equation}
		where $\bar{c}$ is uniquely determined by
\begin{equation}
\label{alpha2}
			\alpha_2 = \frac{2\bar{c}-1}{\bar{c}^2(1-\bar{c})^2}.
\end{equation}
Outside this region, $l_{\alpha_2}(c)$ has only one local maximizer $c^*$. Inside this region, $l_{\alpha_2}(c)$ has exactly two local maximizers $c_1^*$ and $c_2^*$.
\end{proposition}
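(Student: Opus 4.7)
The plan is to reduce the classification problem to a bifurcation analysis of the critical points of $l_{\alpha_2}$ on $(0,1)$. The first step is to compute
\[
l'(c) = \log\frac{p}{1-p} - \log\frac{c}{1-c} + \alpha_1 c + \frac{\alpha_2}{2}c^2, \quad l''(c) = \alpha_1 + \alpha_2 c - \frac{1}{c(1-c)}, \quad l'''(c) = \alpha_2 + \frac{1-2c}{c^2(1-c)^2},
\]
and observe that $l'(c) \to +\infty$ as $c \to 0^+$ while $l'(c) \to -\infty$ as $c \to 1^-$, so every local maximizer of $l_{\alpha_2}$ is an interior critical point and $l_{\alpha_2}$ attains its supremum in $(0,1)$.

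Next I would note that $c \mapsto 1/[c(1-c)]$ is strictly convex on $(0,1)$, so $l''$ is strictly concave and has at most two zeros in $(0,1)$. Therefore $l'$ has at most two critical points and hence at most three zeros on $(0,1)$, while the boundary sign data forces the number of zeros of $l'$ to be odd. Consequently $l_{\alpha_2}$ has either exactly one local maximizer (one zero of $l'$) or exactly two local maximizers separated by one local minimizer (three zeros of $l'$). The boundary in the $(p,\alpha_1)$-plane between these two regimes is precisely the set of parameters for which two zeros of $l'$ coalesce, i.e. where the system $l'(\bar c) = 0$ and $l''(\bar c) = 0$ admits a common interior solution $\bar c$.

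Solving $l''(\bar c) = 0$ gives $\alpha_1(\bar c) = 1/[\bar c(1-\bar c)] - \alpha_2 \bar c$, and substituting into $l'(\bar c) = 0$ determines $p(\bar c)$; as $\bar c$ ranges over $(0,1)$ one obtains a smooth parametrized boundary curve in the $(p,\alpha_1)$-plane. The corner of the V corresponds to the further coalescence of a third zero, i.e. a triple root of $l'$ characterized by $l'''(\bar c) = 0$. This last equation rearranges to $\alpha_2 = (2\bar c - 1)/[\bar c^2(1-\bar c)^2]$; a direct derivative check shows this right-hand side is strictly increasing in $\bar c \in (0,1)$ with full range $\mathbb{R}$, so $\bar c$ is uniquely determined by $\alpha_2$ as in (\ref{alpha2}). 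Plugging the resulting $\bar c$ back into $\alpha_1(\bar c)$ and $p(\bar c)$ and simplifying yields (\ref{alpha1}) and (\ref{p}). Differentiating the parametrization shows that at the cusp value both $d\alpha_1/d\bar c$ and $d[\log(p/(1-p))]/d\bar c$ vanish (they are forced to vanish by the combination of $l'' = 0$ and $l''' = 0$), which confirms the V-shape corner where the two smooth branches $\bar c < \bar c^c$ and $\bar c > \bar c^c$ meet. The main obstacle I anticipate is the final bookkeeping step of identifying which side of the V corresponds to the two-maximizer regime; I would settle this by a single sign check of $l'$ at a conveniently chosen reference configuration on each branch and then propagate the classification globally by continuity, since the count of zeros of $l'$ can change only when the parameters cross the identified boundary curve.
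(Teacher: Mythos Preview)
Your approach is correct and frames the problem in the language of singularity theory: you parametrize the fold set $\{l'=l''=0\}$ by the location of the double root and locate the corner as the unique cusp point $\{l'=l''=l'''=0\}$. The paper instead works ``from the top derivative down'': it first fixes $\bar c$ as the unique zero of $l'''$ (the inflection point of $l'$, depending only on $\alpha_2$), then separates the cases $\alpha_1\le\alpha_1^c$ and $\alpha_1>\alpha_1^c$ according to whether $l''$ ever becomes positive, and in the latter case introduces the two roots $c_1<\bar c<c_2$ of $l''$ together with auxiliary functions $m(c)=1/[c(1-c)]-\alpha_2 c$ and $n(c)=1/(1-c)-\log[c/(1-c)]+\tfrac{1-2\bar c}{2\bar c^2(1-\bar c)^2}c^2$ to decide when $l'(c_1)<0<l'(c_2)$. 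Your parametrization and the paper's $(c_1,c_2)$ picture are two views of the same object: your fold parameter $\bar c$ is exactly what the paper calls $c_1$ on one branch and $c_2$ on the other.

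What each buys: your argument is cleaner conceptually and gets to the corner formulas with almost no computation beyond the derivative chain you wrote down; indeed $d\alpha_1/d\bar c=-l'''(\bar c)$ along the fold, and then $d[\log(p/(1-p))]/d\bar c=-\bar c\,d\alpha_1/d\bar c$, so both vanish at the cusp exactly as you claim. The paper's more explicit route, on the other hand, delivers for free the global geometry you still owe: it shows directly that $n$ has its minimum at $\bar c$ and that $m(a(p)),\,m(b(p))$ are decreasing with $m\to\infty$ as $p\to0$, which is precisely the statement that both branches of the fold curve are graphs over $p$ heading to $(0,+\infty)$ and enclose a genuine V. In your write-up you should add this global check (it follows easily from your parametrization: $\alpha_1(\bar c)=1/[\bar c(1-\bar c)]-\alpha_2\bar c\to+\infty$ and $p(\bar c)\to 0$ at both endpoints $\bar c\to 0^+,1^-$, with $\alpha_1'(\bar c)=-l'''(\bar c)$ single-signed on each side of the cusp), since a local cusp singularity alone does not rule out self-intersections or a more complicated region. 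With that addition your ``sign check plus continuity'' step is a perfectly valid way to finish.
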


\begin{proof}
The location of maximizers of $l_{\alpha_2}(c)$ on the interval $[0, 1]$ is closely related to the properties of its derivatives:
\begin{equation*}
			l_{\alpha_2}'(c) = \log\frac{p}{1-p} + \alpha_1 c + \frac{\alpha_2}{2}c^2 - \log\frac{c}{1-c},	
\end{equation*}
\begin{equation*}
			l_{\alpha_2}''(c) = \alpha_1 + \alpha_2 c - \frac{1}{c(1-c)},
\end{equation*}	
\begin{equation}
			l_{\alpha_2}'''(c) = \alpha_2 + \frac{1-2c}{c^2(1-c)^2}.
\end{equation}	
We check that $l_{\alpha_2}'''(c)$ is monotonically decreasing on $[0, 1]$, $l'''_{\alpha_2}(0) = \infty$, and $l'''_{\alpha_2}(1) = -\infty$. Thus there is a unique $\bar{c}$ in $(0, 1)$ such that $l_{\alpha_2}'''(\bar{c}) = 0$, with $l_{\alpha_2}'''(c) > 0$ for $c < \bar{c}$ and $l_{\alpha_2}'''(c) < 0$ for $c > \bar{c}$. Since the correspondence between $\alpha_2$ and $\bar{c}$ is one-to-one, we may describe $\alpha_2$ by (\ref{alpha2}).

This implies that $l''_{\alpha_2}(c)$ is increasing from $0$ to $\bar{c}$, and decreasing from $\bar{c}$ to $1$, with the global maximum achieved at $\bar{c}$, where
\begin{equation}
			l_{\alpha_2}''(\bar{c}) = \alpha_1 + \frac{3\bar{c}-2}{\bar{c}(1-\bar{c})^2}.
\end{equation}
Let $\alpha_1^c$ be defined as in (\ref{alpha1}) so that $l_{\alpha_2}''(\bar{c}; \alpha_1^c) = 0$. It follows that for $\alpha_1 \le \alpha_1^c$, $l_{\alpha_2}''(c) \le 0$ on the entire interval $[0,1]$; whereas for $\alpha_1 > \alpha_1^c, l_{\alpha_2}''(c)$ takes on both positive and negative values, and we denote the transition points by $c_1$ and $c_2$ ($c_1 < \bar{c} < c_2$). For fixed $\alpha_2$, $c_1$ and $c_2$ are solely determined by $\alpha_1$, and vice versa. Let $m(c)= \alpha_1 - l_{\alpha_2}''(c)$ so that $\alpha_1 = m(c_1) = m(c_2)$. We have $m(0) = m(1) = \infty, m(c)$ is decreasing from $0$ to $\bar{c}$, and increasing from $\bar{c}$ to $1$.

We proceed to analyze properties of $l_{\alpha_2}'(c)$ and $l_{\alpha_2}(c)$ on the interval $[0, 1]$. For $\alpha_1 \le \alpha_1^c, l_{\alpha_2}'(c)$ is monotonically decreasing.  		For $\alpha_1 > \alpha_1^c, l_{\alpha_2}'(c)$   is decreasing from $0$ to $c_1$, increasing from $c_1$ to $c_2$, then decreasing again from $c_2$ to $1$. We write down the explicit expressions of $l_{\alpha_2}'(c_1)$ and $l_{\alpha_2}'(c_2)$:
\begin{equation*}
			l_{\alpha_2}'(c_1) = \log\frac{p}{1-p} + \frac{1}{1-c_1} - \log\frac{c_1}{1-c_1} + \frac{1-2\bar{c}}{2\bar{c}^2(1-\bar{c})^2}c_1^2,
\end{equation*}
\begin{equation}
			l_{\alpha_2}'(c_2) = \log\frac{p}{1-p} + \frac{1}{1-c_2} - \log\frac{c_2}{1-c_2} + \frac{1-2\bar{c}}{2\bar{c}^2(1-\bar{c})^2}c_2^2.
\end{equation}
Notice that $l_{\alpha_2}(c)$ is a bounded continuous function, $l_{\alpha_2}'(0) = \infty $, and $l_{\alpha_2}'(1) = -\infty$, so $l_{\alpha_2}(c)$ cannot be maximized at $0$ or $1$. For $\alpha_1 \le \alpha_1^c, l_{\alpha_2}'(c)$ crosses the c-axis 			only once, going from positive to negative. Thus $l_{\alpha_2}(c)$ has a unique local maximizer $c^*=\bar{c}$. For $\alpha_1 > \alpha_1^c$, the situation is more complicated. 		If $l_{\alpha_2}'(c_1) \ge 0$ (resp. $l_{\alpha_2}'(c_2) \le 0$), $l_{\alpha_2}(c)$ has a unique local maximizer at a point $c^* > c_2$ (resp. $c^* < c_1)$. If 					$l_{\alpha_2}'(c_1) < 0 < l_{\alpha_1}'(c_2)$,  then $l_{\alpha_2}(c)$ has two local maximizers $c_1^*$ and $c_2^*$, with $c_1^* < c_1 < \bar{c} < c_2 < c_2^*$.

Let
\begin{equation}
			n(c) = \frac{1}{1-c} - \log\frac{c}{1-c}+\frac{1-2\bar{c}}{2\bar{c}^2(1-\bar{c})^2}c^2
\end{equation}
so that $l_{\alpha_2}'(c_1) = \log(p/(1-p))+ n(c_1)$ and $l_{\alpha_2}'(c_2) =\log(p/(1-p)) + n(c_2)$. We have $n(0) = \infty$, $n(1) = \infty$, with derivative $n'(c)$ given by
\begin{equation}
			n'(c) = c\bigg ( \frac{1-2\bar{c}}{\bar{c}^2(1-\bar{c})^2} - \frac{1 -  2c}{c^2(1-c)^2}\bigg) = c\left(l_{\alpha_2}'''(\bar{c}) - l_{\alpha_2}'''(c)\right).
\end{equation}
As $l_{\alpha_2}'''(c)$ is monotonically decreasing on $[0, 1]$, $n(c)$ is decreasing from $0$ to $\bar{c}$, and increasing from $\bar{c}$ to $1$, with the global minimum achieved at $\bar{c}$,
\begin{equation}
			n(\bar{c}) = \frac{1}{1-\bar{c}} - \log\frac{\bar{c}}{1-\bar{c}} + \frac{1-2\bar{c}}{2(1-\bar{c})^2}.
\end{equation}
This implies that $l_{\alpha_2}'(c_1; p, \alpha_1^c) \ge 0 $ for $p\geq p^c$ (\ref{p}). The only possible region in the $(p,\alpha_1)$-plane where $l_{\alpha_2}'(c_1) < 0 < l_{\alpha_1}'(c_2)$ is thus bounded by $p < p^c$  and $\alpha_1 > \alpha_1^c$.

Finally, we analyze the behavior of $l_{\alpha_1}'(c_1)$ and $l_{\alpha_1}'(c_2)$ more closely when $p$ and $\alpha_1$ are chosen from this region. Recall that $c_1 < \bar{c} < c_2$. By monotonicity of $n(c)$ on the intervals $(0, \bar{c})$ and $(\bar{c} , 1)$, there exist continuous functions $a(p)$ and $b(p)$ of $p$, such that $l_{\alpha_2}'(c_1) < 0$ for $c_1 > a(p)$ and
		$l_{\alpha_2}'(c_2) > 0$ for $c_2 > b(p)$. As $p \to 0$, $a(p) \to 0$ and $b(p) \to 1$. $a(p)$ is an increasing function of $p$, whereas $b(p)$ is a decreasing function, and they satisfy $n(a(p)) = n(b(p)) = - p$. The restrictions on $c_1$ and $c_2$ yield restrictions on $\alpha_1$, and we have $l_{\alpha_2}'(c_1) < 0$ for $\alpha_1 < m(a(p))$ and $l_{\alpha_2}'(c_2) > 0$ for $\alpha_1 > m(b(p))$. As $p \to 0, m(a(p)) \to \infty$ and $m(b(p)) \to \infty$. $m(a(p))$ and $m(b(p))$ are both decreasing functions of $p$ and they satisfy $l_{\alpha_2}'(c_1; p, m(a(p)) = l_{\alpha_2}'(c_2; p, m(b(p)) = 0$. As $l_{\alpha_2}'(c_2; p, \alpha_1) > l_{\alpha_2}'(c_1; p,\alpha_1)$ for every $(p, \alpha_1)$, the curve $m(b(p))$ must lie below the curve $m(a(p))$, and together they generate the bounding curves for the V-shaped region in the $(p,\alpha_1)$-plane with corner point $(p^c, \alpha_1^c)$ where two local maximizers exist for $l_{\alpha_2}(c)$.		
\end{proof}

From Proposition \ref{max}, the critical curve is traced out by $(p^c, \alpha_1^c, \alpha_2)$ (\ref{p}) (\ref{alpha1}) (\ref{alpha2}), where we take $1/2\leq \bar{c}=c^* \leq 2/3$ to meet the non-negativity constraints on $\alpha_1^c$ and $\alpha_2$. See Figure \ref{3D}. We delve deeper into the behavior of the function $\lambda$ (\ref{lamb}) along the critical curve. To lighten the notation, we denote the associated parameters by $(p, \alpha_1, \alpha_2)$, and the unique local maximizer by $c^*$ with $1/2\leq c^*\leq 2/3$.

\begin{lemma}
\label{burnin}
Along the critical curve, we have

\noindent (1) $\lambda(c^*)=c^*$, $\lambda'(c^*)=1$, $\lambda''(c^*)=0$, and $\lambda'''(c^*)\leq -8$.

\noindent (2) $\lambda''(c)\leq 0$ for $c\geq c^*$, and $\lambda''(c)\geq 0$ for $c\leq c^*$.
\end{lemma}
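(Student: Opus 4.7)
The plan is to combine the $\lambda$–$\varphi$ correspondence of Lemmas \ref{lem1}--\ref{lem2} for the fixed-point identities with direct chain-rule computations of the higher derivatives of $\lambda$. By construction of the critical curve in Proposition \ref{max}, taking $c^*=\bar c$ makes $\bar c$ simultaneously the zero of $l_{\alpha_2}'''$, of $l_{\alpha_2}''$ (via $\alpha_1=\alpha_1^c$), and of $l_{\alpha_2}'$ (via $p=p^c$). Since $\varphi = l_{\alpha_2}+\log(1-p)$, this gives $\varphi'(c^*)=\varphi''(c^*)=\varphi'''(c^*)=0$, so Lemma \ref{lem1} immediately yields $\lambda(c^*)=c^*$ and the boundary case of Lemma \ref{lem2} yields $\lambda'(c^*)=1$.

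For $\lambda''(c^*)$ and $\lambda'''(c^*)$, I will differentiate directly using the sigmoid representation $\lambda = \sigma(g(c)+\log(p/(1-p)))$, where $\sigma(x)=e^x/(1+e^x)$ and $g(c)=\alpha_1 c+\alpha_2 c^2/2$. The chain rule gives $\lambda'(c)=g'(c)\,\lambda(1-\lambda)$ and
\[
\lambda''(c)=\lambda(1-\lambda)\bigl[\alpha_2+(g'(c))^2(1-2\lambda(c))\bigr].
\]
From $\lambda'(c^*)=g'(c^*)\,c^*(1-c^*)=1$ I extract $g'(c^*)=1/(c^*(1-c^*))$, and the bracket at $c^*$ collapses to exactly $\varphi'''(c^*)=0$, so $\lambda''(c^*)=c^*(1-c^*)\varphi'''(c^*)=0$. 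Differentiating once more and using $\lambda''(c^*)=0$ to kill several terms, I will obtain
\[
\lambda'''(c^*)=2\alpha_2(1-2c^*)-\frac{2}{c^*(1-c^*)}=-\frac{2(2c^*-1)^2}{(c^*(1-c^*))^2}-\frac{2}{c^*(1-c^*)},
\]
after substituting $\alpha_2=(2c^*-1)/(c^*(1-c^*))^2$ from the critical curve. Since $c^*\in[1/2,2/3]$ forces $c^*(1-c^*)\le 1/4$ and hence $2/(c^*(1-c^*))\ge 8$, while the first term is manifestly non-positive, this yields $\lambda'''(c^*)\le -8$.

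For part (2), the sign of $\lambda''(c)$ agrees with the sign of $h(c):=\alpha_2+(1-2\lambda(c))(g'(c))^2$, since $\lambda(1-\lambda)>0$. I will introduce the auxiliary function
\[
k(c):=\lambda(c)-\tfrac{1}{2}-\frac{\alpha_2}{2(g'(c))^2}=-\frac{h(c)}{2(g'(c))^2},
\]
so $k$ and $h$ have opposite signs. A direct computation using $g''=\alpha_2$ gives $k'(c)=\lambda'(c)+\alpha_2^2/(g'(c))^3$; because $\alpha_1^c,\alpha_2\ge 0$ along the critical curve (indeed $\alpha_1^c\ge 0$ follows from $c^*\le 2/3$ and $\alpha_2\ge 0$ from $c^*\ge 1/2$) and $\lambda'>0$, we have $k'(c)>0$ strictly. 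Hence $k$ is strictly increasing; combined with $k(c^*)=0$ (equivalent to $h(c^*)=0$, proven in part (1)), this shows $k$ changes sign from negative to positive at $c^*$, so $h$ and thus $\lambda''$ change sign from positive to negative at $c^*$, yielding the desired monotonicity of concavity.

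The main obstacle is purely computational: organizing the higher-derivative chain-rule expansions so that the critical-curve identities $g'(c^*)\,c^*(1-c^*)=1$ and $\alpha_2(c^*(1-c^*))^2=2c^*-1$ collapse the messy bracketed expressions into clean closed forms expressible in $c^*$ alone; once this bookkeeping is in place, every assertion follows from elementary monotonicity on the interval $[1/2,2/3]$.
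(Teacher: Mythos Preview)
Your proof is correct and follows essentially the same approach as the paper: direct chain-rule computation of $\lambda',\lambda'',\lambda'''$ at $c^*$ for part~(1), and a monotonicity argument for the sign of $\lambda''$ in part~(2). Your auxiliary function $k(c)=\lambda(c)-\tfrac12-\alpha_2/(2(g'(c))^2)$ is an algebraic rearrangement of the paper's comparison $\frac{\alpha_2+(g')^2}{(g')^2-\alpha_2}\le A$ (recall $A=\lambda/(1-\lambda)$), and both arguments ultimately rest on the fact that $\lambda$ and $g'=\alpha_1+\alpha_2 c$ are increasing on $[0,1]$.
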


\begin{proof}
The first claim follows from direct computation. We spell out some details. It is clear that $\lambda(c^*) = c^*$. Letting
$A = p\exp(\alpha_1 c + \frac{\alpha_2}{2} c^2)/(1-p)$, we write $\lambda$'s first few derivatives as
\begin{equation*}
\lambda'(c) = \frac{\alpha_1 + \alpha_2 c}{1+A} \lambda(c),
\end{equation*}
\begin{equation*}
\lambda''(c) = \lambda(c) \frac{\alpha_2}{1+A} + (1-A) \frac{\lambda'(c)^2}{\lambda(c)},
\end{equation*}
\begin{equation}
\lambda'''(c) = \lambda'(c) \frac{\alpha_2}{1+A} - \alpha_2 \lambda(c) \lambda'(c) + 2(1-A)\frac{\lambda'(c)\lambda''(c)}{\lambda(c)} - (1-A)\frac{\lambda'(c)^3}{\lambda(c)^2} - (\alpha_1 + \alpha_2 c)A \frac{\lambda'(c)^2}{\lambda(c)}.
\end{equation}
Substituting the parameter values (\ref{p}) (\ref{alpha1}) (\ref{alpha2}) yields $\lambda'(c^*) = 1$, $\lambda''(c^*) = 0$, and
\begin{equation}
\lambda'''(c^*) = \frac{-6(c^*)^2 + 6(c^*) - 2}{(c^*)^2 (1-c^*)^2}\leq -8.
\end{equation}

For the second claim, we show that for $c \ge c^{\ast}$, $\lambda''(c) \le 0$. The parallel claim may be verified using a similar line of reasoning. As derived previously,
\begin{equation}
\lambda''(c) = \frac{A}{(1+A)^3} \left( (1+A)\alpha_2 + (1-A)(\alpha_1 + \alpha_2 c)^2 \right).
\end{equation}
Notice that $\lambda''(c)\leq 0$ precisely when
\begin{equation}
\frac{\alpha_2 + (\alpha_1 + \alpha_2 c)^2 }{(\alpha_1 + \alpha_2 c)^2 - \alpha_2} \le A,
\end{equation}
where equality holds when $c = c^{\ast}$. For $c$ increasing from $c^{\ast}$, $A$ is increasing whereas the left hand of the above inequality is decreasing. Our claim thus follows.
\end{proof}

By Lemma \ref{burnin}, the expected magnetization drift $(\lambda(c^*)-c^*)/n$ (\ref{expTstep}) drops from first order to third order along the critical curve as compared with other parameter regions. As a result, we anticipate that the burn-in will be slower. The following Theorem \ref{last} establishes an upper bound. Utilizing coupling techniques from Levin et al. \cite{LLP} with minor adaptation, an $O(n^{2/3})$ mixing of the Glauber dynamics is further expected. For details, see Lemma 2.9 and Theorem 4.1 of \cite{LLP}.

\begin{theorem}
\label{last}
Along the critical curve, the burn-in time for the Glauber dynamics is $O(n^{3/2})$.
\end{theorem}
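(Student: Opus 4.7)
The plan is to adapt the biased-random-walk analysis of Theorem~\ref{negdrift} to the degenerate drift regime along the critical curve. By Lemma~\ref{burnin} and Taylor's theorem,
\begin{equation*}
\lambda(c)-c \;=\; \tfrac{1}{6}\lambda'''(c^*)(c-c^*)^3 + O\bigl((c-c^*)^4\bigr)
\end{equation*}
in a neighborhood of $c^*$, with $\lambda'''(c^*)\le -8$; moreover, by part~(2) of Lemma~\ref{burnin}, $\lambda(c)-c$ retains its sign across all of $[0,c^*)\cup(c^*,1]$, so the expected per-step magnetization drift $(\lambda(c_t)-c_t)/n$ from Lemma~\ref{expmagnetization} continues to point toward $c^*$ on either side.

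I would split the burn-in into two stages. In the macroscopic stage, starting from $c_0$ separated from $c^*$ by some fixed margin $\mu>0$, the drift is $\Omega(1/n)$ and the proof of Theorem~\ref{negdrift} applies essentially verbatim to show that $c_t$ enters the window $[c^*-\mu,c^*+\mu]$ within $O(n)$ steps with probability $1-e^{-\Omega(\sqrt n)}$. The bulk of the work lies in the critical stage, where one must reduce $|c_t-c^*|$ from a fixed constant all the way down to $O(n^{-1/4})$ even though the drift has degenerated from $\Theta(1/n)$ to $\Theta((c_t-c^*)^3/n)$.

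For the critical stage, treating $c_t>c^*$ (the other side being symmetric), I would introduce the Lyapunov potential $\phi(c)=(c-c^*)^{-2}$ and use a second-order Taylor expansion together with $c_{t+1}-c_t\in\{-1/n,0,1/n\}$ and $\ER((c_{t+1}-c_t)^2\mid c_t)=\Theta(1/n^2)$ to compute
\begin{equation*}
\ER\bigl(\phi(c_{t+1})-\phi(c_t)\bigm|c_t\bigr) \;=\; -\frac{2(\lambda(c_t)-c_t)}{n(c_t-c^*)^3} \;+\; O\!\left(\frac{1}{n^{2}(c_t-c^*)^{4}}\right).
\end{equation*}
Plugging in the Taylor expansion of $\lambda$, the leading term equals $-\lambda'''(c^*)/(3n)\ge 8/(3n)$, and it dominates the error provided $c_t-c^*\ge Kn^{-1/4}$ for a sufficiently large constant $K$. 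Thus $\phi(c_t)$ has positive drift of order $1/n$ until it first reaches level $n^{1/2}/K^{2}$, equivalently $c_t-c^*\le Kn^{-1/4}$. A moment-generating-function/Chernoff bound modeled on the $S_{t_1,t_2}$ analysis in the proof of Theorem~\ref{negdrift}, applied to $\phi(c_t)-\phi(c_0)-\gamma t/n$ with $\theta$ rescaled to match the $O(n^{-1/4})$-size of one-step increments of $\phi$, then implies that $T=O(n^{3/2})$ steps suffice for the potential to grow from $O(1)$ to $\Omega(n^{1/2})$ with high probability.

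The main obstacle is the delicate balance between drift and noise near the target threshold $|c_t-c^*|\sim n^{-1/4}$: there the one-step variance of $\phi$ is comparable to its one-step mean, and $c_t$ may in principle oscillate across $c^*$, where $\phi$ blows up. To handle this, I would run the potential argument only up to the stopping time $\tau_K=\inf\{t:|c_t-c^*|\le Kn^{-1/4}\}$, so that all Taylor remainders remain negligible; past $\tau_K$, the two-sided sign structure of $\lambda(c)-c$ from Lemma~\ref{burnin}(2) provides a restoring force that, by a second exponential-supermartingale argument, keeps the chain within an $O(n^{-1/4})$-window on the $O(n^{3/2})$ time scale. Combining the macroscopic and critical stages yields the asserted $O(n^{3/2})$ burn-in bound.
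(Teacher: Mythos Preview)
Your route is a valid alternative, but it differs substantially from the paper's. The paper never runs a pathwise Lyapunov argument: it works entirely at the level of expectations, tracking the scalar $\theta_t^+=\ER\bigl(|c_t-c^*|\,\mathbf{1}_{\{\tau_0>t\}}\bigr)$ with $\tau_0=\min\{t:|c_t-c^*|\le 1/n\}$, and it uses Lemma~\ref{burnin}(2) not merely for the \emph{sign} of $\lambda(c)-c$ but for the \emph{concavity/convexity} of $g(e)=\lambda(e+c^*)-c^*$ on each half-line. That concavity is exactly what licenses Jensen's inequality and converts the conditional drift into the deterministic cubic recursion $\theta_{t+1}^+\le \theta_t^+-\tfrac{4}{3n}(\theta_t^+)^3+O(n^{-2})$, after which the conclusion is read off from Theorem~4.1 of \cite{LLP}. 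Your potential $\phi(c)=(c-c^*)^{-2}$ is a legitimate substitute and your drift calculation is correct (with the minor caveat that ``$\ge 8/(3n)$'' requires $\lambda'''\le -8$ in a whole neighborhood, whereas Lemma~\ref{burnin}(1) gives it only at $c^*$; continuity supplies a possibly smaller positive constant). The step to watch is the concentration bound: near the threshold $|c_t-c^*|\sim Kn^{-1/4}$ the one-step increments of $\phi$ are $\Theta(n^{-1/4})$ while its drift is only $\Theta(1/n)$, so the $\theta=\Theta(\sqrt n)$ moment-generating-function scaling from Theorem~\ref{negdrift} does not close as written; what does work is Azuma--Hoeffding on the stopped submartingale $\phi(c_{t\wedge\tau_K})-\gamma(t\wedge\tau_K)/n$, whose total quadratic variation over $T=dn^{3/2}$ steps is $O(dn)$ and which therefore yields $\PR(\tau_K>dn^{3/2})\le e^{-\Omega(d)}$---enough for the theorem, though weaker than the $e^{-\Omega(\sqrt n)}$ of the non-critical burn-in. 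In short, both arguments reach $O(n^{3/2})$: the paper's Jensen device is shorter and plugs directly into \cite{LLP}, while your approach is more self-contained and gives pathwise information, at the cost of a concentration step that must be genuinely rescaled rather than simply ``modeled on'' Theorem~\ref{negdrift}.
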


\begin{proof}
From Lemma \ref{expmagnetization},
\begin{equation}
\ER(c_{t+1} - c^{\ast} \mid c_t - c^{\ast}) = \frac{1}{n} \left( \lambda(c_t) - \lambda(c^{\ast}) \right) + \left(1 - \frac{1}{n}\right) (c_t - c^{\ast}) + O\left(\frac{1}{n^2}\right).
\end{equation}
Let $e_t = c_t - c^{\ast}$ and define $g(e_t) = \lambda(e_t + c^{\ast}) - \lambda(c^{\ast})$. Then
\begin{equation*}
\ER(|e_{t+1}| \mid e_t) = \frac{1}{n} g(|e_t|) + \left(1 - \frac{1}{n}\right) |e_t| + O\left(\frac{1}{n^2}\right) \text{ for } e_t \geq 0,
\end{equation*}
\begin{equation}
\label{E}
\ER(|e_{t+1}| \mid e_t) = -\frac{1}{n}g(-|e_t|) + \left(1 - \frac{1}{n}\right) |e_t| + O\left(\frac{1}{n^2}\right) \text{ for } e_t<0.
\end{equation}
Define $\tau_0 = \min\{ t \ge 0 : |e_t| \le 1/n \}$. Note that $e_t$ does not change sign when $t<\tau_0$. Multiplying both sides of (\ref{E}) by the indicator function $\mathbf{1}_{\{\tau_0 > t\}}$ and using that $g(0) = 0$ and $\mathbf{1}_{\{\tau_0 > t + 1\}} \le \mathbf{1}_{\{\tau_0 > t\}}$, we have
\begin{equation*}
		\ER\left( |e_{t+1}| \mathbf{1}_{\{\tau_0 > t + 1\}} \mid e_t \right) \le \frac{1}{n}g\left( |e_t| \mathbf{1}_{\{\tau_0 > t\}} \right) + \left(1-\frac{1}{n}\right)|e_t| \mathbf{1}_{\{\tau_0 > t\}} + O(\frac{1}{n^2}),
	\end{equation*}
\begin{equation}
\label{Etheta}
\ER(|e_{t+1}|\mathbf{1}_{\{\tau_0 > t+1\}} \mid e_t) \leq -\frac{1}{n}g(-|e_t|\mathbf{1}_{\{\tau_0 > t\}}) + \left(1 - \frac{1}{n}\right) |e_t|\mathbf{1}_{\{\tau_0 > t\}} + O\left(\frac{1}{n^2}\right).
\end{equation}
Let $\theta_t^{+} = \ER\left( |e_t| \mathbf{1}_{\{ \tau_0 > t \}} \right)$. By Lemma \ref{burnin}, $g(e)\leq 0$ for $e\geq 0$ and $g(e)\geq 0$ for $e\leq 0$, $g$ is concave down on the non-negative axis and concave up on the negative axis. Taking expectation of both sides of (\ref{Etheta}) and applying Jensen's inequality on $g$,
\begin{equation*}
\theta_{t+1}^{+} - \theta_{t}^{+} \le - \frac{1}{n} \left( \theta_t^{+} - g(\theta_t^{+}) \right) + O(\frac{1}{n^2}),
\end{equation*}
\begin{equation}
\theta_{t+1}^{+} - \theta_{t}^{+} \le - \frac{1}{n} \left( g(-\theta_t^{+})-\left(-\theta_t^{+}\right)\right) + O(\frac{1}{n^2}).
\end{equation}

Let $\mu > 0$ and suppose that $\theta_t^{+} \ge \mu$. As in the proof of Theorem \ref{negdrift}, by the extreme value theorem, there exists $\gamma(\mu) > 0$ such that
\begin{equation}
\theta_{t+1}^{+} - \theta_t^{+} \le - \frac{\gamma(\mu)}{n}.
\end{equation}
Utilizing the negative drift $-\gamma(\mu)/n$, there exists a time $t^*=O(n)$ so that $\theta_t^{+} \leq 1/4$ for all $t\geq t^*$. Consider the Taylor series expansion of $g(\theta_t^{+})$ and $g(-\theta_t^{+})$ and using Lemma \ref{burnin}:
\begin{equation*}
		g(\theta_t^{+}) = \lambda'(c^{\ast}) \theta_t^{+} + \frac{\lambda''(c^{\ast})}{2}(\theta_t^{+})^2 + \frac{\lambda'''(d_1)}{6}(\theta_t^{+})^3
			\leq \theta_t^{+} - \frac{4}{3}(\theta_t^{+})^3,
\end{equation*}
\begin{equation}
		g(-\theta_t^{+}) = -\lambda'(c^{\ast}) \theta_t^{+} + \frac{\lambda''(c^{\ast})}{2}(\theta_t^{+})^2 - \frac{\lambda'''(d_2)}{6}(\theta_t^{+})^3
			\geq -\theta_t^{+} + \frac{4}{3}(\theta_t^{+})^3,
\end{equation}
where $d_1 \in [c^{\ast}, c^*+\theta_t^{+}]$ and $d_2\in [c^*-\theta_t^+, c^*]$. Then it follows that
\begin{equation}
\theta_{t+1}^{+} \le \theta_t^{+} - \frac{4}{3n}(\theta_t^{+})^3 + O(\frac{1}{n^2})
\end{equation}
for $t\geq t^*$. The remainder of the proof follows analogously as in the proof of Theorem 4.1 in \cite{LLP}. For some $c$,
\begin{equation}
\lim_{c \to \infty} \PR \left( \tau_0 > cn^{3/2} \right) = 0
\end{equation}
uniformly in $n$, which implies that the Glauber dynamics may be coupled so that the magnetizations agree in $O(n^{3/2})$ time steps.
\end{proof}

\section{Generalizations and future work}
\label{generalizations}
Numerous extensions can be made about these vertex-weighted exponential models. We started the discussion with the edge-triangle lattice gas (Ising) model, but clearly more complicated subgraph densities can be considered. Denote by $K_k$ a complete graph on $k$ vertices so that an edge is $K_2$ and a triangle is $K_3$. For consistency, denote by $C_0 = 1$, $C_1 = S(X, i)$, and $C_2 = T(X, i)$. As in (\ref{ST}), the crucial quantity
\begin{equation}
C_m(X, i) = \sum_{i\neq i_1 \neq i_2 \neq \dots \neq i_m} X_{i_1} X_{i_2} \cdots X_{i_m}
\end{equation}
satisfies $C_m = {C_1 \choose m}$, which may be justified by noting that a term in the sum defining $C_m$ has value $1$ if and only if every vertex in the product has spin $1$. Moving on, we have employed a discrete-time update of the network, but the network may be updated on a continuous-time basis, and this may be realized by posing iid Poisson clocks and examining the corresponding heat kernel. More significantly, rather than the simplifying assumption that a person is either interested or not in building a friendship, in reality a person probably has different levels of interest in forming a connection, and an edge is placed between two people when the joint interest exceeds a certain threshold value. Also, in social networks people have diverse attributes; only people with the same attribute or those with more than a specified number of attributes will establish a tie, which will fall within the regime of the random cluster model and multilayer networks. All these extensions are quite challenging both theoretically and computationally, especially when network geometry comes into play, but we hope to address at least some of them in future work.

After we gain an understanding of the small-world observed structure of big network data using Markov chain dynamics, we may use this knowledge for the prediction and control of general spreading processes on large-scale networks. These processes include the social influence of opinions, users' decisions to adopt products, and epidemic intervention strategies, etc. To illustrate, we return to information diffusion over Twitter mentioned at the beginning of this paper. Updating the weight of a vertex corresponding to a celebrity or a news source will definitely have more impact than that for ordinary people. So instead of running the Glauber dynamics that chooses a vertex at random, we choose the ``hubs'' of the network to update. This selective procedure decreases the mixing time and drives the spreading dynamics more efficiently towards equilibrium. Other properties of the chains may be studied simultaneously. For example, the cover time of the network may be interpreted as a realization of a ``web crawl'', and the hitting time may be interpreted as the necessary local queries to determine the global connectivity.

\section*{Acknowledgements}

Mei Yin thanks Richard Kenyon for helpful conversations.

\end{document}